\documentclass[reqno]{amsart}
\usepackage{amsthm}
\usepackage{amssymb}
\usepackage{graphicx}
\usepackage{enumerate}
\usepackage{cite}
\usepackage{amsrefs}

\theoremstyle{plain}
\newtheorem{thm}{Theorem}

\newtheorem*{thm*}{Theorem}
\newtheorem{lem}[thm]{Lemma}
\newtheorem{cor}[thm]{Corollary}

\newcommand{\figdir}[0]{.}
\newcommand{\gfextn}[0]{eps}
\newcommand{\rheinwidth}[0]{1.3in}
\newcommand{\longbox}[2]{\framebox[51pt]{\phantom{0}\texttt{#1}\ldots\texttt{#2}\phantom{0}}}
\newcommand{\shortbox}[0]{\framebox[35pt]{\phantom{\texttt{+}}0\phantom{\texttt{+}}}}

\newcommand{\tinybox}[0]{\framebox[8pt]{\phantom{0}}}
\newcommand{\abs}[1]{\lvert#1\rvert}
\newcommand\fixspacetop{\rule{0pt}{2.2ex}}

\numberwithin{thm}{section}
\numberwithin{equation}{section}

\subjclass[2010]{Primary: 52B60; Secondary: 11R09, 52A10, 52B05}
\keywords{Reinhardt polygon, Reinhardt polynomial, dihedral composition.}

\begin{document}

\title{Most Reinhardt polygons are sporadic}
\date{\today}

\author{Kevin G. Hare}
\address{Department of Pure Mathematics, University of Waterloo, Waterloo, Ontario, Canada N2L 3G1.}
\email{kghare@uwaterloo.ca}
\thanks{Research of K.G. Hare was partially supported by NSERC}

\author{Michael J. Mossinghoff}
\address{Department of Mathematics and Computer Science, Davidson College, Davidson, NC 28036 USA.}
\email{mimossinghoff@davidson.edu}
\thanks{This work was partially supported by a grant from the Simons Foundation (\#210069 to Michael Mossinghoff).}

\begin{abstract}
A \textit{Reinhardt polygon} is a convex $n$-gon that, for $n$ not a power of $2$, is optimal in three different geometric optimization problems, for example, it has maximal perimeter relative to its diameter.
Some such polygons exhibit a particular periodic structure; others are termed \textit{sporadic}.
Prior work has described the periodic case completely, and has shown that sporadic Reinhardt polygons occur for all $n$ of the form $n=pqr$ with $p$ and $q$ distinct odd primes and $r\geq2$.
We show that (dihedral equivalence classes of) sporadic Reinhardt polygons outnumber the periodic ones for almost all $n$, and find that this first occurs at $n=105$.
We also determine a formula for the number of sporadic Reinhardt polygons when $n=2pq$ with $p$ and $q$ distinct odd primes.
\end{abstract}

\maketitle

\section{Introduction}\label{secIntroduction}

Reinhardt polygons are a class of convex polygons that are optimal in three different geometric optimization problems: they have maximal perimeter and maximal width with respect to their diameter, and maximal width relative to their perimeter \citelist{\cite{Reinhardt22}\cite{BezdekFodor00}\cite{AudetHansenMessine09}}.
Further, for $n$ not a power of $2$, the Reinhardt polygons with $n$ sides are precisely the optimal convex polygons in these three problems.  
We describe Reinhardt polygons only briefly here; for more details we direct the reader to \citelist{\cite{Mossinghoff06AMM}\cite{Mossinghoff06DCG}\cite{HareMossinghoff13}}.

A \textit{Reinhardt polygon} is an equilateral convex polygon $P$ that may be inscribed in a Reuleaux polygon $R$ in such a way that every vertex of $R$ is a vertex of $P$.
If we define the \textit{skeleton} of a polygon to be its vertices, together with the line segments that connect vertices at maximal distance from one another, then it follows that the skeleton of a Reinhardt polygon with $n$ sides contains a star polygon, where each interior angle has measure an integer multiple of $\pi/n$.
For example, when $n$ is odd, the regular $n$-gon is a Reuleaux polygon, and its associated skeleton is the regular star polygon with $n$ points.
A Reinhardt polygon can be described by naming the measures of each of the angles of its associated star polygon, in the order of their visitation as one traverses it edges.
Since each angle is $k_i\pi/n$ for some integer $k_i$, and their sum is $\pi$, it follows that we may describe a Reuleaux polygon as a composition of $n$ into an odd number of parts $\ell$: $[k_1,\ldots,k_\ell]$.
Further, since we consider two polygons to be equivalent if one can be obtained from the other by some combination of rotations and flips, we associate a Reinhardt polygon with an equivalence class of such compositions under a dihedral action.
Such an equivalence class is called a \textit{dihedral composition}.

Figure~\ref{figReinhardt30} exhibits six Reinhardt polygons with $n=30$ sides, along with their associated dihedral compositions.
The skeleton of each polygon is exhibited in the network of line segments within it, and the subset of each skeleton forming the associated star polygon is depicted with thicker lines.
The first row of the polygons here have an evident periodic structure: for these, the associated composition is a composition for some divisor $m$ of $n$ into an odd number of parts, repeated $n/m$ times, and the polygon exhibits a corresponding rotational symmetry.
In these cases we abbreviate the composition by writing just the composition of $m$, together with $n/m$ as an exponent to show the number of times the sequence is repeated.
Reinhardt polygons with this property are called \textit{periodic Reinhardt polygons}, and these are well understood.
In \cite{Mossinghoff11}, a formula for the number of such polygons is derived (and independently in \cite{Gashkov07} for the case of cyclic equivalence classes).
If we let $E_0(n)$ denote the number of periodic Reinhardt polygons with $n$ sides under dihedral equivalence classes, then it follows from this formula that
\begin{equation}\label{eqnPeriodic}
E_0(n) = \frac{p2^{n/p}}{4n}\left(1+o(1)\right),
\end{equation}
for positive integers $n$ with smallest odd prime divisor $p$.
We remark that there are several additional periodic Reinhardt $30$-gons, beyond the three shown in Figure~\ref{figReinhardt30}, since $E_0(30)=38$.

\begin{figure}[tbh]
\caption{Some Reinhardt polygons with $n=30$ sides.}\label{figReinhardt30}
\begin{tabular}{ccc}
\includegraphics[width=\rheinwidth]{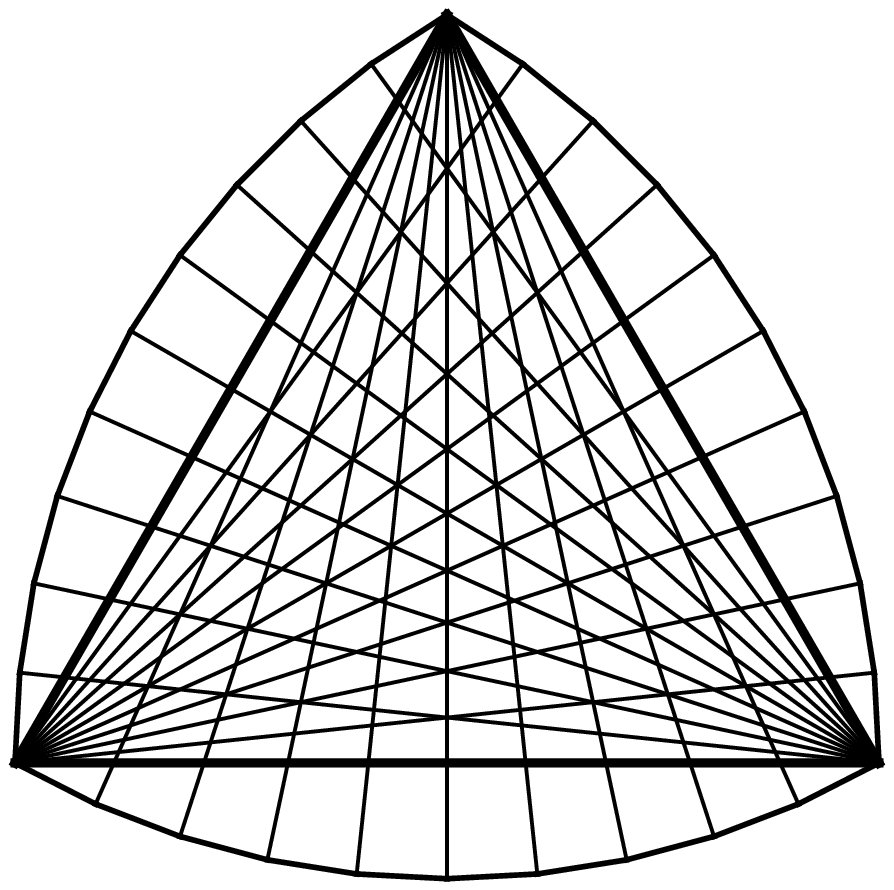} &
\includegraphics[width=\rheinwidth]{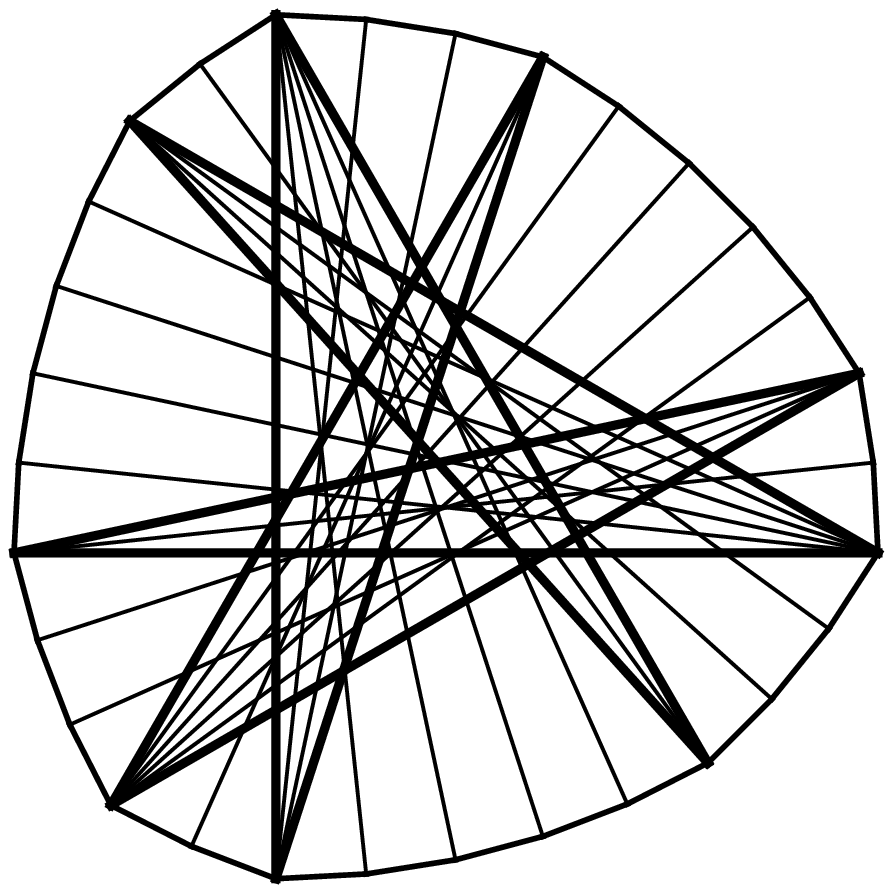} &
\includegraphics[width=\rheinwidth]{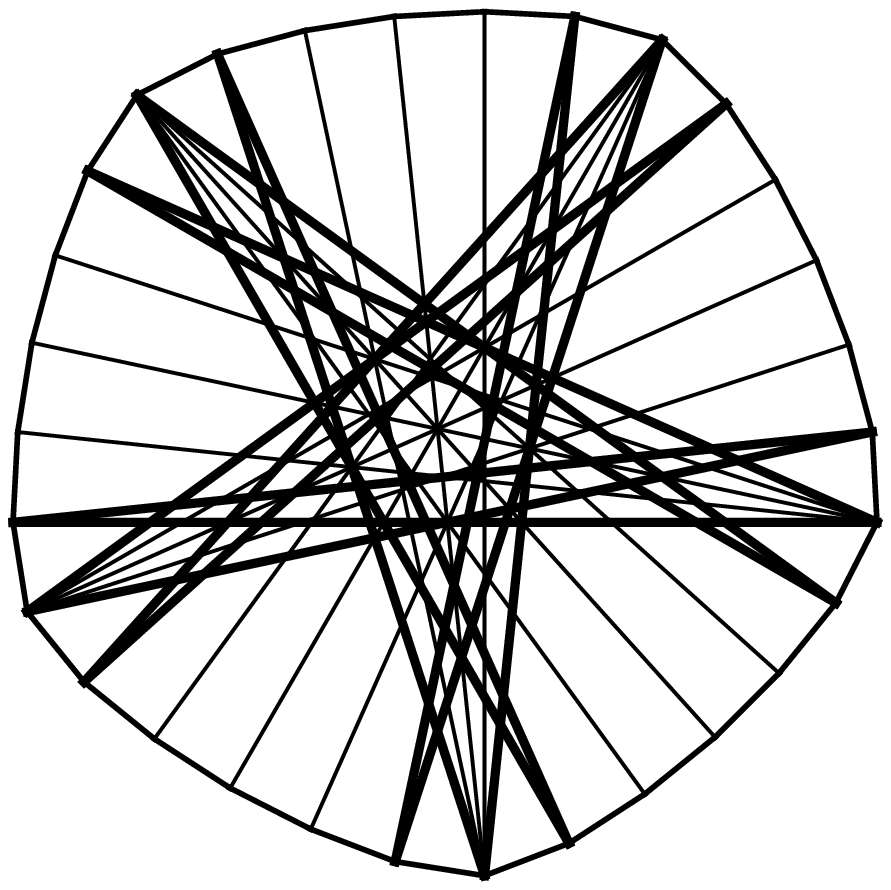}\\
\small $[(10)^3]$ &
\small $[(5,3,2)^3]$ &
\small $[(4,1,1)^5]$ \\[12pt]
\includegraphics[width=\rheinwidth]{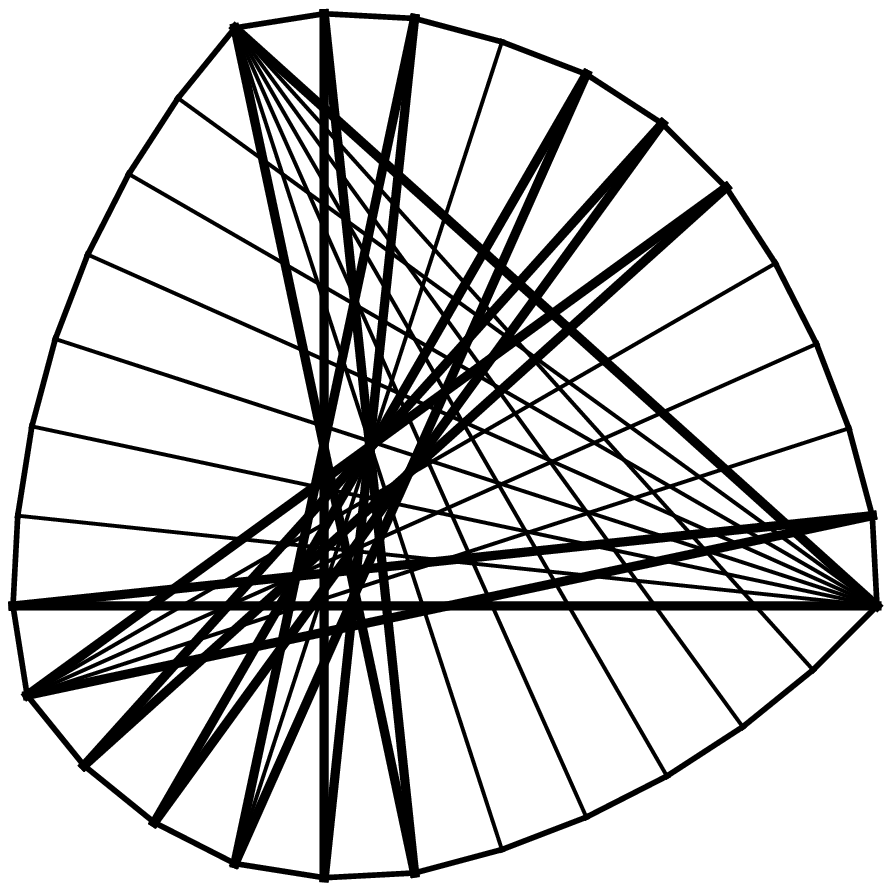} &
\includegraphics[width=\rheinwidth]{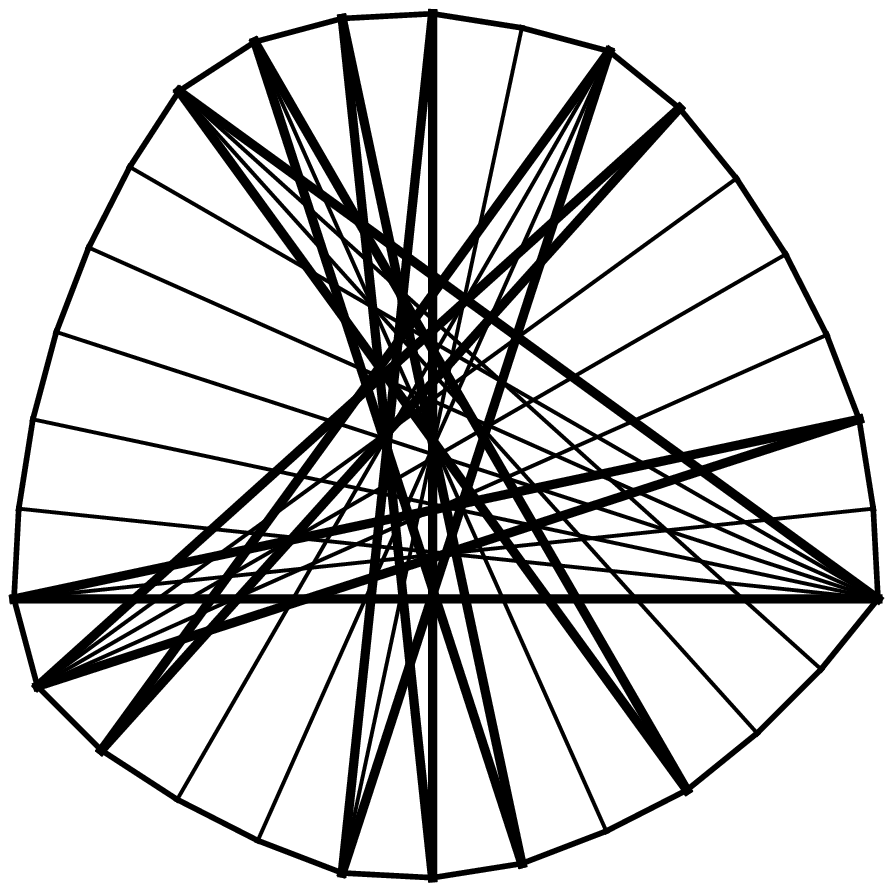} &
\includegraphics[width=\rheinwidth]{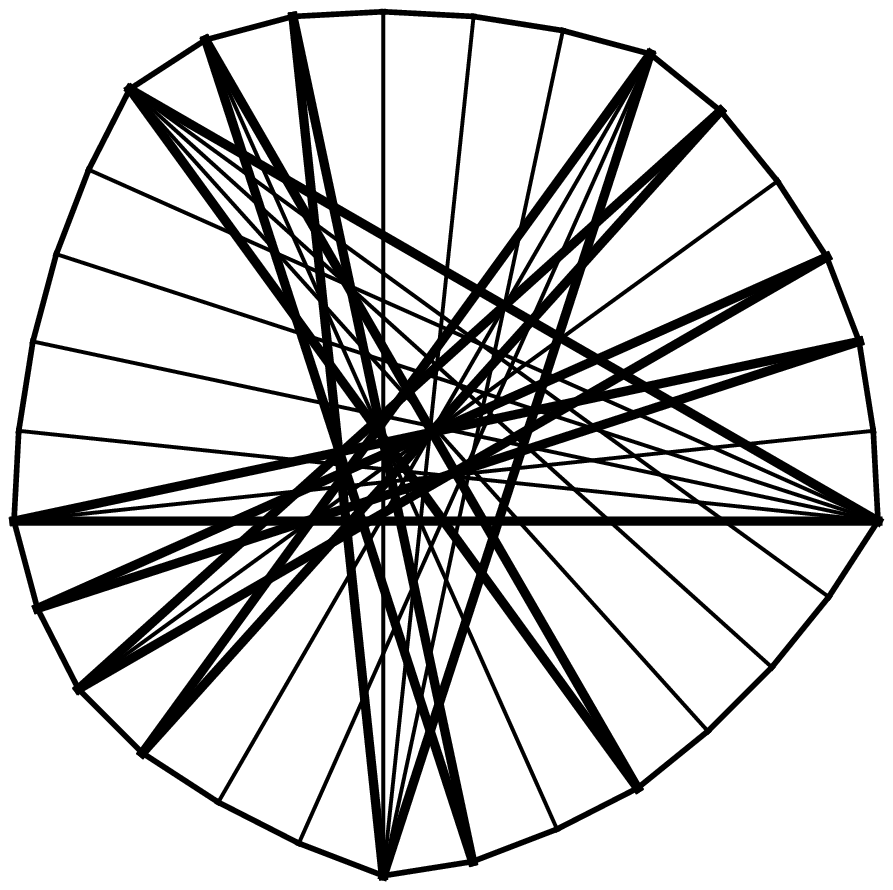}\\
\small $[7,6,1,1,1,1,2,$ &
\small $[6,3,1,2,1,1,1,$ &
\small $[5,4,1,2,1,1,4,$\\
\small \quad $\;1,1,1,1,1,4,1,1]$ &
\small \quad $\;1,2,3,1,1,4,1,2]$ &
\small \quad $\;3,1,1,2,1,1,1,2]$
\end{tabular}
\end{figure}

Reinhardt \cite{Reinhardt22} established a correspondence between these polygons and certain polynomials.
We define a \textit{Reinhardt polynomial} for $n$ to be a polynomial $F(z)$ having $\deg(F)<n$, all coefficients in $\{-1,0,1\}$, its nonzero coefficients alternate in sign, there are an odd number of nonzero coefficients, and the cyclotomic polynomial $\Phi_{2n}(z)$ is a factor of $F(z)$.
Reinhardt in fact also required $F(0)=1$ (and we employed this stricter definition in \cite{HareMossinghoff13}), but it is convenient here to broaden the definition slightly.
A Reinhardt polynomial $F(z)=\pm\sum_{i=0}^\ell (-1)^i z^{k_i}$ corresponds to the dihedral composition $[k_1-k_0,k_2-k_1,\ldots,k_\ell-k_{\ell-1},n-k_\ell+k_0]$ of $n$ into an odd number of parts; the condition that $\Phi_{2n}(z)\mid F(z)$ guarantees that this composition produces a closed path when constructing the star polygon.
For details, see \cite{Mossinghoff06AMM}.

We say a Reinhardt polygon is \textit{sporadic} if it is not periodic, and define $E_1(n)$ to be the number of sporadic Reinhardt $n$-gons, again using dihedral equivalence classes.
For example, each of the polygons in the second row of Figure~\ref{figReinhardt30} is sporadic for $n=30$.
In fact, these are the only three sporadic Reinhardt triacontagons, so $E_1(30)=3$.

In \cite{Mossinghoff11}, the second author proved that $E_1(2^a p^b)=0$ if $p$ is an odd prime, $a\geq0$, and $b\geq1$, and computed $E_1(n)$ for a number of other values of $n$, in fact for all positive integers having $n-\varphi(2n)\leq46$.
Aided by that data, it was speculated there that $E_1(pq)=0$ and that $E_1(pqr)>0$ for distinct odd primes $p$ and $q$ and $r\geq2$.
Despite the fact that $E_1(n)<E_0(n)$ in each case computed there, it was also conjectured that $E_1(n)>E_0(n)$ for almost all positive integers $n$, and that this would first occur at $n=105$.
In \cite{HareMossinghoff13}, the authors proved the first two of these conjectures, regarding the cases $n=pq$ and $n=pqr$, and they further showed by explicit construction that a positive proportion of the Reinhardt $n$-gons are sporadic in the case $n=pqr$, when $p$ and $q$ are fixed and $r$ is large.
Some additional empirical evidence that the sporadic polygons may outnumber the periodic ones at $n=105$ was also presented.

In this article, we describe a generalized method for constructing sporadic Reinhardt $n$-gons for integers of the form $n=pqr$ with $p$ and $q$ odd primes and $r\geq2$.
Our method allows us to prove that $E_1(n)>E_0(n)$ for almost all positive integers $n$, and to show that this first occurs at $n=105$.
We prove the following theorems.

\begin{thm}\label{thmSporadicsWin}
For almost all positive integers $n$, the number of sporadic Reinhardt polygons exceeds the number of periodic Reinhardt polygons.
\end{thm}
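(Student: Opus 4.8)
\emph{Proof plan.}
The plan is to reduce the statement to integers of the form $n=pqr$ with $p$ and $q$ odd primes (not necessarily distinct) and $r\ge2$, to feed such $n$ into the construction developed in the remainder of the paper so as to obtain a lower bound for $E_1(n)$, and to compare that bound with the asymptotic formula $E_0(n)=\tfrac{p}{4n}2^{n/p}(1+o(1))$ of \eqref{eqnPeriodic}. A brief case analysis shows that the integers $n$ \emph{not} of this form are precisely the powers of $2$, the integers $2^ap$ with $p$ an odd prime, the squares $p^2$ of odd primes, and the products $pq$ of two distinct odd primes; in particular every such $n$ satisfies $\omega(n)\le2$. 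Since $\#\{n\le x:\omega(n)\le2\}=O(x\log\log x/\log x)=o(x)$, these exceptional $n$ form a set of density zero, so it suffices to prove $E_1(n)>E_0(n)$ for almost all $n=pqr$ of the stated shape. Henceforth, for such $n$ we let $p\le q$ be its two smallest odd prime divisors and set $r=n/(pq)\ge2$.

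The second step is to run the construction on $n=pqr$, which yields a lower bound $E_1(pqr)\ge\Psi(p,q,r)$ for an explicit quantity $\Psi$. What must be established is that $\Psi$ grows at the \emph{same} exponential rate $2^{n/p}$ that controls $E_0(n)$, and that, for each fixed pair $(p,q)$, the ratio $\Psi(p,q,r)/E_0(pqr)$ exceeds $1$ as soon as $r$ is sufficiently large; a construction reaching only the smaller scale $2^{n/q}$ or $2^{n/(pq)}$ would be of no use here, since then $E_1(n)/E_0(n)\to0$. Making this quantitative, I would extract a threshold $r_0(p,q)$ such that $E_1(pqr)>E_0(pqr)$ whenever $r\ge r_0(p,q)$, and --- this is the delicate point --- check that $r_0(p,q)$ is bounded above by a fixed power of $q$.

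The last step is to show that the integers $n=pqr$ with $r<r_0(p,q)$ have density zero. If $r_0(p,q)\le q^{C}$, then $n=pqr<q^{C+2}$, which forces the second-smallest odd prime divisor of $n$ to be at least $n^{\varepsilon}$ with $\varepsilon=1/(C+2)$; equivalently, all but at most one odd prime factor of $n$ is $\ge n^{\varepsilon}$. Writing $n=2^ap^bM$ with every prime dividing $M$ at least $n^{\varepsilon}$ forces $\Omega(M)=O(1/\varepsilon)$, and a routine count --- fix every factor of $n$ except its largest prime, bound the number of choices for that prime by $\pi(x/d)\ll x/(d\log x)$, and sum $\sum_d 1/d=O(\log\log x)$ over the admissible $d$ --- bounds the number of such $n\le x$ by $O\bigl(x(\log\log x)^{O(1)}/\log x\bigr)=o(x)$. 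Combining the three steps gives $E_1(n)\ge\Psi(p,q,r)>E_0(n)$ for every $n$ outside a set of density zero, which is the assertion.

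I expect the main obstacle to be entirely in the second step: designing a construction whose yield $\Psi$ is large enough to overtake $E_0(n)$ --- which sits at the dominant exponential scale $2^{n/p}$ attached to the \emph{smallest} odd prime of $n$ --- and controlling the growth of the threshold $r_0(p,q)$ past which this happens, so that it is slow enough (polynomial in $p$ and $q$ is more than enough) for the concluding density estimate to go through. The opening reduction and the final sieve-type count are routine.
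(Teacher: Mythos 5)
Your overall architecture (reduce to $n=pqr$, lower-bound $E_1$ via the construction, compare with \eqref{eqnPeriodic}, then show the exceptional $n$ have density zero) is the paper's, but the ``delicate point'' you flag in step two fails as stated, and your step-three sieve is built on it. The construction of Section~\ref{secConstruction} produces at most $2^{r_op+r_eq}$ polynomials for a composition with odd-part total $r_o$ and even-part total $r_e$; since $r_o,r_e\geq1$ and $p<q$, this is at most $2^{p+(r-1)q}=2^{n/p}\cdot 2^{-(q-p)}$, so the lower bound $\Psi(p,q,r)$ one gets (Corollary~\ref{corBoundSporadicPolygons} with $\mathbf{c}=(1,r-1)$) sits a factor $2^{q-p}$ \emph{below} the scale $2^{n/p}$ of $E_0(n)$. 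The resulting ratio is $E_1(n)/E_0(n)\gtrsim r(2^{p-1}-1)/(p2^{q-1})$, so the threshold is $r_0(p,q)\asymp p\,2^{q-p}$ --- exponential in $q-p$, not polynomial in $q$. For $p=3$ and $q$ large this gives $r_0\approx 3\cdot2^{q-3}$, so your hypothesis $r_0(p,q)\leq q^C$ is false for this construction (and cannot be repaired by swapping the roles of $p$ and $q$ via Lemma~\ref{lem:symmetry}, which only makes the exponent smaller). Consequently the bad set $\{n=pqr: r<r_0(p,q)\}$ only forces $q\gg\log n$, not $q\geq n^{\varepsilon}$, and your concluding count of integers with second-smallest odd prime factor $\geq n^{\varepsilon}$ does not apply.

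The missing idea is to control the \emph{gap} $q-p$ rather than the size of $q$: the paper shows that the density of integers having at least two distinct odd prime divisors below $m$ is $1-\frac{2\log\log m}{\log m}(1+o(1))$, so for a proportion $1-o_m(1)$ of all $n$ one has $q-p<m$ and hence $r_0(p,q)<cm2^m$, an absolute constant depending only on $m$; the remaining $n$ with $r\leq cm2^m$ then satisfy $n<m^2\cdot cm2^m$ and form a finite set. Letting $m\to\infty$ completes the argument. Two smaller points: your reduction allows $p=q$, but the construction needs \emph{distinct} odd primes (and indeed $E_1(2^ap^b)=0$), so those $n$ must be absorbed into the density-zero exceptional set rather than fed into step two; and your claim that the exceptional set is exactly $\{n:\omega(n)\le 2\}$ is slightly off (e.g.\ $2^apq$ has $\omega=3$ but is excluded by the paper because $r$ there must carry an odd prime), though this does not affect the density-zero conclusion.
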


\begin{thm}\label{thm105}
The smallest integer $n$ for which the number of sporadic Reinhardt polygons exceeds the number of periodic ones is $n=105$.
\end{thm}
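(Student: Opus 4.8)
The plan is to establish the two halves of the statement separately: that $E_1(105)>E_0(105)$, and that $E_1(n)\le E_0(n)$ for every positive integer $n<105$.

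For the values $n<105$ I would first clear away the many $n$ with $E_1(n)=0$: this holds for every $n=2^ap^b$ with $p$ an odd prime (by \cite{Mossinghoff11}) and for every $n=pq$ with $p,q$ distinct odd primes (by \cite{HareMossinghoff13}), and in these cases $E_1(n)\le E_0(n)$ is trivial. What remains are the integers below $105$ that are divisible by at least two distinct odd primes and are not of the form $pq$; since $105$ is the least product of three distinct odd primes, each such $n$ is in fact of the shape $2^cp^aq^b$ with $c\le 2$, and the complete list is $30$, $42$, $45$, $60$, $63$, $66$, $70$, $75$, $78$, $84$, $90$, $99$, $102$. Every one of these satisfies $n-\varphi(2n)\le 46$ (with maximum $42$, attained at $n=90$), so $E_0(n)$ is given by the exact formula of \cite{Mossinghoff11} and $E_1(n)$ by the exhaustive enumeration performed there (or a recomputation of it); recording the two quantities in a short table exhibits $E_1(n)<E_0(n)$ in every case. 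The only place a routine slip could occur here is in certifying that this list of exceptional $n$ is complete.

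For $n=105=3\cdot 5\cdot 7$ I would compute $E_0(105)$ exactly from the formula of \cite{Mossinghoff11} — its least odd prime factor is $3$, so this count is close to $2^{35}/140$ — and then invoke the construction of sporadic Reinhardt $n$-gons developed in this article, with one of the three primes cast in the role of $r$, to produce an explicit family of sporadic Reinhardt $105$-gons together with a count of how many of them are pairwise inequivalent under the dihedral action. The steps to verify are that each composition produced really is Reinhardt, i.e.\ that the associated $\{-1,0,1\}$-polynomial is divisible by $\Phi_{210}$; that none of the resulting polygons is periodic; and that the number of dihedral classes so obtained strictly exceeds the computed value of $E_0(105)$. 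Concluding $E_1(105)>E_0(105)$ then finishes the proof. A direct enumeration of all Reinhardt $105$-gons would be an alternative, but there $n-\varphi(2n)=57$ lies beyond the range treated in prior work, so the construction is the natural route.

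The main obstacle is the $n=105$ case. Because $E_0(105)$ is itself large, on the order of $10^8$, the construction must be pushed to yield a provably valid lower bound on $E_1(105)$ of at least that magnitude, and the delicate part is counting the constructed compositions up to rotation and reflection without double counting — this requires pinning down exactly when two of them are dihedrally equivalent. A lesser but genuine difficulty is simply carrying out and certifying the thirteen pairs of computations for $n<105$ together with the exact evaluation of $E_0(105)$.
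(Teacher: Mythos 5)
Your treatment of $n<105$ matches the paper exactly: the same list of thirteen integers $n=pqr$ with $r\geq2$, and an appeal to the exact counts of \cite{Mossinghoff11} (Table~1 there) to check $E_1(n)<E_0(n)$ in each case. That half is fine.

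The gap is in the $n=105$ case. You correctly identify the danger --- that the construction must be pushed to a lower bound of order $10^8$ --- but your plan is that the construction of this article, by itself, yields enough dihedral classes to beat $E_0(105)$. It does not. The paper computes $E_0(105)=245\,518\,324$ and finds that the new construction produces only $\ddot{E}_1(105)=211\,752\,810$ distinct sporadic classes, which falls short by more than $33$ million; the paper explicitly notes that $\ddot{E}_1(105)\neq E_1(105)$. The actual proof is therefore a hybrid: it supplements the construction with a partial direct enumeration stratified by the largest part $m$ of the dihedral composition, computing $E_1(105,m)$ for $m=2$ and $m\geq12$ (from \cite{Mossinghoff11}) and additionally for $m\in\{8,9,10,11\}$, and then --- this is the delicate bookkeeping step your outline has no analogue of --- determining exactly how many of those enumerated polygons are \emph{not} producible by the construction ($6\,394\,732$ and $31\,449\,744$ respectively), so that they can be added without double counting. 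This gives $E_1(105)\geq 211\,752\,810+6\,394\,732+31\,449\,744=249\,597\,286>E_0(105)$. Without this supplementary enumeration (or some other source of roughly $34$ million further sporadic classes), the argument as you propose it cannot close.
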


We also establish an exact formula for $E_1(n)$ for integers of the form $n=2pq$, where $p$ and $q$ are distinct odd primes.
For this, recall that the \textit{Fermat quotient} of a prime $p$ with base $a$ having $p\nmid a$ is the integer $(a^{p-1}-1)/p$.

\begin{thm}\label{thm2pq}
For $p$ and $q$ distinct odd primes, the number of sporadic Reinhardt polygons with $n=2pq$ sides is the product of the Fermat quotients with base $2$ for $p$ and $q$:
\[
E_1(2pq) = \frac{(2^{p-1}-1)(2^{q-1}-1)}{pq}.
\]
\end{thm}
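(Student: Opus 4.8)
The plan is to reduce the count to one about Reinhardt polynomials and to classify the sporadic ones for $n=2pq$ via a ``dihedral composition'' of a piece of data attached to $p$ with one attached to $q$. The arithmetic input is that $2n=4pq$ with $pq$ odd, so $\Phi_{2n}(z)=\Phi_{4pq}(z)=\Phi_{2pq}(z^{2})=\Phi_{pq}(-z^{2})$, of degree $\varphi(4pq)=2(p-1)(q-1)$. Thus any Reinhardt polynomial $F$ for $n$ (of degree $<2pq$) is $\Phi_{4pq}(z)\,\gamma(z)$ with $\deg\gamma\le 2(p+q-1)-1$; equivalently, writing $F(z)=A(z^{2})+zB(z^{2})$ for its even and odd parts, the condition $\Phi_{4pq}\mid F$ amounts to ``$\Phi_{2pq}\mid A$ and $\Phi_{2pq}\mid B$''. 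First I would determine which such $F$ have nonzero coefficients in $\{-1,1\}$, alternating in sign and odd in number. Using the structural results of \cite{HareMossinghoff13} and the classical $\{-1,0,1\}$ form of $\Phi_{pq}$, I would build a map sending a pair $(\mathcal{D}_{p},\mathcal{D}_{q})$ of ``local data'' to a Reinhardt polynomial for $n$, organized by the Chinese Remainder isomorphism $\mathbb{Z}/2pq\mathbb{Z}\cong\mathbb{Z}/2\mathbb{Z}\times\mathbb{Z}/p\mathbb{Z}\times\mathbb{Z}/q\mathbb{Z}$; the sign conditions should translate into one parity constraint on each of $\mathcal{D}_{p},\mathcal{D}_{q}$, so that, after fixing a base point, there are $2^{p-1}$ admissible $\mathcal{D}_{p}$ and $2^{q-1}$ admissible $\mathcal{D}_{q}$, each collection containing one \emph{trivial} datum (the one producing the unique Reinhardt polygon for $2p$, resp.\ $2q$, represented by $\Phi_{4p}(z)=(z^{2p}+1)/(z^{2}+1)$).

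The key classification statement I would aim for is: a base-pointed Reinhardt polynomial for $n$ is \emph{sporadic} if and only if it comes from a pair with \emph{both} $\mathcal{D}_{p}$ and $\mathcal{D}_{q}$ nontrivial, and the construction restricts to a bijection on that locus. (Periodicity of a $2pq$-gon means a nontrivial rotational symmetry, necessarily of order $p$, $q$, or $pq$, and a CRT computation ties these to one or both data being trivial; the construction need not capture every periodic polygon, only every sporadic one.) Granting this, the number of base-pointed sporadic Reinhardt polynomials for $n$ is
\[
(2^{p-1}-1)(2^{q-1}-1).
\]

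Finally one removes the base point. When both $\mathcal{D}_{p},\mathcal{D}_{q}$ are nontrivial, no nontrivial rotation fixes the pair (using that $p,q$ are prime), so the relevant group acts freely with orbits of size exactly $pq$; and one checks that reversal of a composition stays within such an orbit, so the reflection in the dihedral group forces no further identification among sporadic objects. Dividing by $pq$ yields
\[
E_{1}(2pq)=\frac{(2^{p-1}-1)(2^{q-1}-1)}{pq}=\frac{2^{p-1}-1}{p}\cdot\frac{2^{q-1}-1}{q},
\]
an integer by Fermat's little theorem, equal to the product of the base-$2$ Fermat quotients of $p$ and $q$. I expect the structural classification — proving the dihedral-composition parametrization \emph{exhausts} the sporadic polygons and isolating the parity constraint that produces the exponents $p-1$, $q-1$ — to be the principal difficulty; the orbit analysis in the last step (the value $pq$ rather than $2pq$, and the handling of reflection) is the other delicate point, and it may be convenient to carry out the whole argument at the level of base-pointed objects, first counting $\hat{E}_{1}(2pq)=(2^{p-1}-1)(2^{q-1}-1)$ and then invoking $E_{1}(n)=\hat{E}_{1}(n)/(pq)$ when $n=2pq$.
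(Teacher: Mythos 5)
Your proposal has the right architecture --- a construction yielding $(2^{p-1}-1)(2^{q-1}-1)$ sporadic polynomials before quotienting by the dihedral action, plus a completeness claim that every sporadic Reinhardt polynomial for $2pq$ is so obtained --- and its first half essentially reproduces what the paper gets from Section~\ref{secConstruction} with the composition $\mathbf{c}=(1,1)$ via Theorems~\ref{thmCountPolynomials}, \ref{thmCountSporadicPolynomials} and Corollary~\ref{corBoundSporadicPolygons}. The genuine gap is in the completeness half, which you flag as ``the principal difficulty'' but do not carry out, and the specific route you propose for it fails at the first step. Writing $F(z)=A(z^{2})+zB(z^{2})$ does give the equivalence $\Phi_{4pq}\mid F$ iff $\Phi_{2pq}\mid A$ and $\Phi_{2pq}\mid B$; however, the defining condition on $F$ --- that its nonzero coefficients, read in order of increasing degree, alternate in sign --- does \emph{not} descend to $A$ and $B$ separately, since consecutive nonzero coefficients of $F$ may occupy positions of the same parity. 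So $A$ and $B$ need not have alternating nonzero coefficients (nor an odd number of them); they are not Reinhardt polynomials for $pq$, and the classification of \cite{HareMossinghoff13} cannot be invoked to describe them. The whole difficulty of the $2pq$ case is precisely the interaction between the even- and odd-indexed coefficients under the global alternating-sign constraint, and your parametrization by independent ``local data'' $(\mathcal{D}_p,\mathcal{D}_q)$ with separate parity conditions is asserted rather than derived.

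The paper instead decomposes $F=f_1(z)\Phi_q(-z^{2p})+f_2(z)\Phi_p(-z^{2q})$ with $f_1,f_2$ having coefficients in $\{-1,0,1\}$ (Lemma~\ref{lemGenlDecomp}) and then runs a seven-step combinatorial argument on the interleaved coefficient sequences $s_i,t_j$ of the two summands, using their $2p$- and $2q$-antiperiodicity, the Chinese remainder theorem, and the alternating-sign condition on the sum to force (after normalizing so that $t_0=t_1=-1$) the pairs $(s_{2k},s_{2k+1})$ and $(t_{2k+1},t_{2k+2})$ into exactly the blocks $S_o(2,(-1)^k)$ and $S_e(2,(-1)^{k+1})$ prescribed by \eqref{eqnAs} and \eqref{eqnBs}. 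That analysis is the substantive content of the proof and is absent from your sketch. Two smaller points also need arguments rather than assertions: the injectivity of the parametrization (this is the uniqueness part of Theorem~\ref{thmCountPolynomials}), and the orbit count under the dihedral action --- your claim that reflection ``forces no further identification'' is exactly the issue of reciprocal polynomials, which the paper handles in Corollary~\ref{corBoundSporadicPolygons} and which must be controlled to turn the lower bound into the stated equality.
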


Section~\ref{secConstruction} describes and verifies our new method for constructing Reinhardt polynomials for integers of the form $n=pqr$.
This method depends on the choice of a composition of $r$ into an even number of parts.
Section~\ref{secCounting} determines the number of different Reinhardt polynomials that may be produced by our method for such a fixed composition, studies how many of these correspond to sporadic Reinhardt polygons, and establishes Theorem~\ref{thmSporadicsWin}.
Section~\ref{secComputations} describes our implementation of this construction and the results of computations performed for a number of values of $n$, including $n=105$ for establishing Theorem~\ref{thm105}.
Section~\ref{sec2pq} derives the formula for the case $n=2pq$.

\section{Constructing Reinhardt polynomials}\label{secConstruction}

Let $p$ and $q$ be distinct odd primes, and let $r$ be an integer with $r\geq2$.
We describe a method for constructing a large family of Reinhardt polynomials for $n=pqr$.
As in \cite{HareMossinghoff13}, by using the result of de Bruijn \cite{deBruijn53}, we know that the principal ideal generated by the cyclotomic polynomial $\Phi_{2n}(z)$ in $\mathbb{Z}[z]$ is generated by $z^n+1$ and the collection $\{\Phi_p(-z^{n/p}) : \textrm{$p$ is an odd prime and $p\mid n$}\}$.
Thus, it suffices to construct polynomials $f_1(z)$ and $f_2(z)$ so that
\[
F(z) = f_1(z)\Phi_q(-z^{pr}) + f_2(z)\Phi_p(-z^{qr})
\]
has an odd number of nonzero coefficients, each $\pm1$, which alternate in sign.
Our method developed here generalizes the construction described in \cite{HareMossinghoff13}.

\subsection{Description}\label{subsecDescription}
We first choose a composition $\textbf{c}=(r_1,\ldots,r_{2m})$ of $r$ into an even number of parts $2m$, so each $r_i$ is a positive integer and $\sum_{i=1}^{2m} r_i = r$.
We construct $f_1(z)$ as a polynomial of degree at most $pr-1$ by describing its coefficients in $p$ blocks of size $r$, which we label $A_1$, \ldots, $A_p$.
Each block $A_i$ is then subdivided into $2m$ sub-blocks, denoted $A_{i,j}$ with $1\leq j\leq2m$, according to the selected composition $\textbf{c}$.
In order to describe these components, we require some notation.

For a nonnegative integer $k$ and $b\in\{1,-1\}$, let $S_o(k,b)$ denote the set of sequences of length $k$ over $\{-1,0,1\}$ having an odd number of nonzero terms, the first of which is $b$, and which alternate in sign.
For example, \texttt{+0-0+}, \texttt{+-+-+}, and \texttt{000+0} are all members of $S_o(5,1)$.
Likewise, let $S_e(k,b)$ denote the set of sequences of length $k$ over $\{-1,0,1\}$ with an even number (possibly zero) of nonzero terms, the first of which is $b$ (if there is a nonzero term) and alternating in sign.
It is worth noting that $\texttt{00000}$ is in both $S_e(5,1)$ and $S_e(5,-1)$.
Finally, let $Z(k)$ denote the singleton set containing only the sequence of length $k$ consisting entirely of zeros.

We may now describe the construction of the sub-blocks $A_{i,j}$.
Choose a fixed value for $s$ from $\{-1,1\}$.
Then select
\begin{equation}\label{eqnAs}
A_{i,j} \in \begin{cases}
S_o(r_1+1, (-1)^{i+1}s), & j=1,\\
S_e(r_j+1, (-1)^is), & \textrm{$j\geq3$ odd},\\
Z(r_j-1), & \textrm{$j$ even}.
\end{cases}
\end{equation}
As a special case, we require that $A_{1,1}$ begins with $s$, not $0$.
The sequence of coefficients for $f_1(z)$ is created by juxtaposing these $p$ blocks: $A_1 A_2 \ldots A_p$.

The polynomial $f_2(z)$ is also constructed relative to the composition $\textbf{c}$ of $r$.
In this case, we describe $q$ blocks $B_1$, \ldots, $B_q$, each with size $r$.
In the same way, each block $B_i$ is composed of $2m$ sub-blocks, denoted $B_{i,j}$ with $1\leq j\leq2m$, and in this case we select
\begin{equation}\label{eqnBs}
B_{i,j} \in \begin{cases}
Z(r_j-1), & \textrm{$j$ odd},\\
S_e(r_j+1, (-1)^is), & \textrm{$j$ even}.
\end{cases}
\end{equation}
Then we construct the sequence of coefficients for $f_2(z)$ by shifting the terms in the aggregated blocks by one position to the right, with the last element negated and then moved to the first position.
If $R$ denotes this shift-right-and-negate operator, then the coefficients of $f_2(z)$ are given in sequence by $R(B_1 B_2\ldots B_q)$.

\subsection{Example}\label{subsecExample}
Suppose $n=120$, with $p=3$, $q=5$, and $r=8$, and we choose the composition $\mathbf{c}=(1,3,2,2)$, and $s=1$.
We must select $A_{1,1}=$ \texttt{+0}, then $A_{1,2}$ must be \texttt{00}.
There are four different possibilities for $A_{1,3}$: \texttt{000}, \texttt{-+0}, \texttt{-0+}, or \texttt{0-+}, and then $A_{1,4}$ must be \texttt{0}.
Suppose we select $A_{1,3}=$ \texttt{-+0}.
Then $A_1=$ \texttt{+0|00|-+0|0}, where vertical bars have been inserted between sub-blocks to aid in parsing.
For $A_2$, the initial signs in the odd-indexed sub-blocks are inverted, so here we may choose $A_2=$ \texttt{0-|00|+0-|0}, and then for $A_3$ we may select \texttt{0+|00|000|0}.
This makes
\[
f_1(z) = 1-z^4+z^5-z^9+z^{12}-z^{14}+z^{17}.
\]
For $f_2(z)$, we see that $B_{i,1}$ is the empty sequence, $B_{i,2}\in S_e(4,(-1)^i)$, $B_{i,3}=$ \texttt{0},  and $B_{i,4}\in S_e(3,(-1)^i)$.
Suppose we choose $B_1=$ \texttt{|-+-+|0|-+0}, $B_2=$ \texttt{|+-00|0|0+-}, $B_3=$ \texttt{|0000|0|-0+}, $B_4=$ \texttt{|+00-|0|000}, and $B_5=$ \texttt{|0000|0|0-+}.
After applying the $R$ operator to the combined sequence, we construct
\[
f_2(z) = -1-z+z^2-z^3+z^4-z^6+z^7+z^9-z^{10}+z^{15}-z^{16}-z^{22}+z^{24}+z^{25}-z^{28}-z^{39}.
\]
We then compute $F(z)=f_1(z)\Phi_5(-z^{24}) + f_2(z)\Phi_3(-z^{40})$.
The first 60 coefficients of each summand are then
\begin{quote}
{\small
\texttt{+000-+00  0-00+0-0 0+000000 \underline{-000+-00  0+00-0+0 0-000000} +000-+00~0-00}\\ 
\texttt{--+-+0-+ 0+-0000+ -00000-0 ++00-000 0000000- \underline{++-+-0+- 0-+0000- +000}}  
}
\end{quote}
Here, the coefficients are listed in blocks of size $r=8$, with underlining showing where prior blocks were negated after multiplying $f_1(z)$ and $f_2(z)$ by $\Phi_q(-z^{pr})$ and $\Phi_p(-z^{pr})$ respectively.
The last 60 coefficients of each summand for $F(z)$ are
\begin{quote}
{\small
\texttt{+0-0 0+000000 \underline{-000+-00  0+00-0+0 0-000000} +000-+00  0-00+0-0 0+000000}\\  
\texttt{\underline{00+0 --00+000 0000000+} --+-+0-+ 0+-0000+ -00000-0 ++00-000 0000000-}  
}
\end{quote}
Summing both, we find the following sequence of coefficients:
\begin{quote}
{\small
\texttt{0-+-0+-+ 00-0+0-+ -+0000-0 0+000-00 0+00-0+- +0-+-0+- +-+0-+0-~+-00}\\
\texttt{+000 -000+000 -000+-0+ -0+-000+ 00-0000+ 0000-+-0 +00000-0 0+00000-}
}
\end{quote}
The polynomial $F(z)$ constructed from this list has all its coefficients in $\{-1,0,1\}$, its $53$ nonzero coefficients alternate in sign, and $\Phi_{240}(z)\mid F(z)$.
Thus, $F(z)$ is a Reinhardt polynomial for $n=120$; its corresponding dihedral composition is
\begin{align*}
[&1,1,2,1,1,3,2,2,1,1,1,5,3,4,4,3,2,1,1,2,1,1,2,1,1,1,\\
&1,2,1,2,1,1,3,4,4,4,4,1,2,1,2,1,4,3,5,5,1,1,2,6,3,6,2].
\end{align*}

\subsection{Verifying the construction}\label{subsecVerifying}
We claim that every polynomial $F(z)$ constructed using this method is a Reinhardt polynomial for $n$.
Clearly $F(z)$ has $\Phi_{2n}(z)$ as a factor.
Also, certainly $f_1(1)\equiv1$ mod $2$ and $f_2(1)\equiv0$ mod $2$, so $F(1)\equiv1$ mod $2$ since $q$ is an odd prime.
We therefore need to show that $F(z)$ has $\{-1,0,1\}$ coefficients, and that its nonzero coefficients alternate in sign.
Clearly $f_1(z)$ and $f_2(z)$ have $\{-1,0,1\}$ coefficients, so we must consider how these values interact when we construct $F(z)$.
We consider the coefficients of $F(z)$ in blocks of size $r$.
Consider the $n$th such block with $1\leq n\leq pq$, so the coefficients of $z^{(n-1)r}$, \ldots, $z^{nr-1}$, and suppose $n=i+kp$.
If $k$ is even, then the contribution to this block of coefficients from $f_1(z)\Phi_q(-z^{pr})$ is the sequence from $A_i$; if $k$ is odd, then $\overline{A_i}$, the negated sequence, is used.
Similarly, if $n=j+\ell q$ and $\ell$ is even, then for the contribution from $f_2(z)\Phi_p(-z^{qr})$, the first $r-1$ values from $B_j$ are used, along with the last value from $B_{j-1}$ (unless $j=1$, in which case it is the last value from $\overline{B_q}$), and if $\ell$ is odd then the negated values are used.
Also, since $i+kp=j+\ell q$, then it follows that $i+j$ and $k+\ell$ have the same parity since $p$ and $q$ are both odd.
Thus, if $i\equiv j$ mod $2$, then either $A_i$ is matched with (shifted) $B_j$, or $\overline{A_i}$ with $\overline{B_j}$; if $i\not\equiv j$ mod $2$ then $A_i$ is paired with (shifted) $\overline{B_j}$, or $\overline{A_i}$ with $B_j$.

Suppose $i$, $j$, $k$, and $\ell$ are all even for a particular block $n$.
We can exhibit the interaction of the sequences $A_i$ and $B_j$ in our construction for this block of size $r$ in $F(z)$ in the following diagram.
Here, \texttt{+}\ldots\texttt{+} denotes a sequence in $S_o(k,1)$ for the indicated length $k$, and \texttt{-}\ldots\texttt{+} denotes a selection from $S_e(k,-1)$ for the required length.
The short box at the beginning of the second line denotes the last element of the prior $B$ block, which we note will either be 0 or $-1$.

\begin{equation}\label{eqnOverlaps}
\begin{split}
A_i \quad&\overbrace{\longbox{+}{+}}^{r_1+1}\overbrace{\shortbox}^{r_2-1}\overbrace{\longbox{-}{+}}^{r_3+1}\overbrace{\shortbox}^{r_4-1}\overbrace{\longbox{-}{+}}^{r_5+1}\overbrace{\shortbox}^{r_6-1}\ldots\\[4pt]
B_j \quad&\,\tinybox\underbrace{\shortbox}_{r_1-1}\underbrace{\longbox{-}{+}}_{r_2+1}\underbrace{\shortbox}_{r_3-1}\underbrace{\longbox{-}{+}}_{r_4+1}\underbrace{\shortbox}_{r_5-1}\underbrace{\longbox{-}{+}}_{r_6+1}\ldots
\end{split}
\end{equation}

We see that nontrivial overlaps may occur only at the boundaries of the all-zero sub-blocks, and in every case, if two nonzero values coincide, then they have opposite sign.
Further, these cancellations cannot disturb the alternating sign pattern in the sum.
The same diagram holds when $i$, $j$, $k$, and $\ell$ are all odd, as well as when $i$ and $k$ are even and $j$ and $\ell$ are odd, and when $i$ and $k$ are odd and $j$ and $\ell$ are even.
The remaining four cases are similar: the corresponding diagram merely reverses the roles of \texttt{+} and \texttt{-}.
It follows that each block of $r$ coefficients in $F(z)$ has the required alternating sign pattern.
By considering the leading and trailing nonzero terms of neighboring blocks, and the possible cancellation that may occur at their boundaries, it follows that this construction always produces a Reinhardt polynomial.
\qed

\subsection{Symmetry of the construction}
\label{sec:symmetry}

In our construction, the roles of $p$ and $q$ are not identical.
From $p$, we construct our $A_{i,j}$, the first of which has the form
    $S_o$ and the rest of which have the form $S_e$ or $Z$.
From $q$, we construct our $B_{i,j}$, all of which have the form $S_e$ or 
    $Z$.
The next result shows that, despite this, 
    the roles of $p$ and $q$ are symmetric.
As a corollary of this, from a computational point of view, we may assume
    that $p < q$.

\begin{lem}
\label{lem:symmetry}
Let $f(z) = f_1(z) \Phi_q(-z^{pr}) + f_2(z) \Phi_p(-z^{qr})$ be a 
    Reinhardt polynomial constructed as above, where $f_1(z)$ is composed
    of blocks $A_1$, $A_2$, \ldots, $A_p$ and $f_2(z)$ is composed of blocks
    $B_1$, $B_2$, \ldots, $B_q$.
Then there exists a dihedrally equivalent 
     $f'(z) = f'_1(z) \Phi_p(-z^{qr}) + f'_2(z) \Phi_q(-z^{pr})$
    where $f'_1(z)$ is composed
    of blocks $A'_1$, $A'_2$, \ldots, $A'_q$ and $f'_2(z)$ is composed of blocks
    $B'_1$, $B'_2$, \ldots, $B'_p$.
\end{lem}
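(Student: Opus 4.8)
The plan is to produce $f'$ by applying a suitable dihedral operation to $f$ and then re-expressing the result as an output of the $(q,p)$-construction, exploiting the fact that the decomposition $f=f_1\Phi_q(-z^{pr})+f_2\Phi_p(-z^{qr})$ is far from unique.

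First I would set up two facts. One: up to an irrelevant overall sign, the dihedral group acts on Reinhardt polynomials for $n$ by the rotations $f(z)\mapsto z^cf(z)\bmod(z^n+1)$ and by the reflection $f(z)\mapsto z^{n-1}f(1/z)$ (reduced mod $z^n+1$, and composed with a rotation if $\deg f<n-1$); each such map sends Reinhardt polynomials for $n$ to Reinhardt polynomials for $n$ and preserves the number and alternation of the nonzero coefficients, since $\Phi_{2n}(z)\mid z^n+1$ and $\Phi_{2n}$ is self-reciprocal. Two: from $(y+1)\Phi_k(-y)=y^k+1$ one obtains the identity
\[
\Phi_q(-z^{pr})\,\Phi_p(-z^r)=\frac{z^{pqr}+1}{z^r+1}=\Phi_p(-z^{qr})\,\Phi_q(-z^r),
\]
so for \emph{any} polynomial $e$ with $\deg e<r$ the pair $\bigl(f_1+\Phi_p(-z^r)e,\ f_2-\Phi_q(-z^r)e\bigr)$ is another decomposition of the same $f$ respecting the bounds $\deg f_1<pr$, $\deg f_2<qr$. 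Read blockwise, $\Phi_p(-z^r)e$ is the length-$pr$ word made of $p$ sign-alternating copies of $e$, and similarly for $\Phi_q(-z^r)e$; so this freedom modifies $f_1$ and $f_2$ only by a single, sign-alternating length-$r$ pattern. The analogous identity and freedom hold for any $(q,p)$-decomposition.

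The core of the proof is to fix the shape and then the types. I would first replace $f$ by its reflection together with one unit of rotation: this reverses the composition $\mathbf c$, interchanges the two cyclotomic factors in the two summands, and converts the right shift $R$ carried by $f_2$ into a left shift which the extra rotation turns back into the shift $R$ now required on the $p$-block summand. The result has the shape $g_1\Phi_p(-z^{qr})+g_2\Phi_q(-z^{pr})$ with $g_1$ a concatenation of $q$ size-$r$ blocks and $g_2=R$ of $p$ size-$r$ blocks --- precisely the shape of a $(q,p)$-output --- but the blocks are not yet of the types \eqref{eqnAs}, \eqref{eqnBs}: reversal sends the leading $S_o$-subblock of each block of $f_1$ to a trailing (even-indexed) position, so $g_2$'s blocks are $B$-type except for a misplaced $S_o$-word, while $g_1$'s blocks are $A$-type except that their leading subblock is only an $S_e$-word. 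Here I would invoke the freedom above: a correction term $\Phi_q(-z^r)e$ (resp.\ $\Phi_p(-z^r)e$) is supported exactly near the subblock boundaries --- where the short $0/(-1)$ boxes sit in \eqref{eqnOverlaps} --- so, choosing $e$ explicitly in terms of the $A_i$, $B_i$ and $\mathbf c$, it can migrate the single ``extra'' nonzero distinguishing an $S_o$-word from an $S_e$-word across block boundaries, simultaneously repairing $g_1$ into a valid $A'$-collection and $g_2$ into a valid $B'$-collection, all without disturbing the global alternating pattern; this is the same boundary cancellation verified in \S\ref{subsecVerifying}. A final rotation by a multiple of $r$ reinstates the normalization that $A'_{1,1}$ begin with the new sign $s'$.

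I expect the main obstacle to be this explicit choice of $e$ and the verification that a single choice works uniformly over all admissible inputs. Beyond the routine check that the sign parameters $(-1)^{i+1}s$ and $(-1)^is$ transform consistently under reversal, relabelling of blocks, and the unit rotation (where the oddness of $p$ and $q$ is used), one must check that the parity bookkeeping of the migrating nonzero closes up around the cyclic family of $p$ (respectively $q$) blocks; a bare reindexing fails here precisely because $p$ and $q$ are odd, which is exactly what forces the correction term $e$. Once this is settled, $f'=g_1\Phi_p(-z^{qr})+g_2\Phi_q(-z^{pr})$ is a Reinhardt polynomial produced by the $(q,p)$-construction and, by construction, dihedrally equivalent to $f$.
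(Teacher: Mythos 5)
Your proposal is correct and is essentially the paper's own argument: the paper likewise transfers the single ``odd-parity'' nonzero between the $A$- and $B$-sides via a correction that cancels in the sum (your identity $\Phi_p(-z^r)\Phi_q(-z^{pr})=\Phi_q(-z^r)\Phi_p(-z^{qr})$ is exactly why its ``changes precisely balance''), and then reverses and negates all blocks to land in the $(q,p)$-form with the reversed composition $\mathbf{c}'=(r_{2m},\ldots,r_1)$. The only differences are cosmetic: the paper corrects first and reverses second, and the explicit $e$ you leave open is just the constant $-s$, i.e.\ toggle the first entry of each $A_i$ by $(-1)^is$ and the last entry of each $B_i$ by $(-1)^is$.
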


\begin{proof}
Let $\mathbf{c} = (r_1, r_2, \ldots, r_{2m})$.
Select sub-blocks $A_{i,j}$ and $B_{i,j}$ for $1\leq j\leq2m$ as in \eqref{eqnAs} and \eqref{eqnBs} to form the blocks $A_1$, \ldots, $A_p$ and $B_1$, \ldots, $B_q$.
For each $A_i$, notice that the first nonzero term is $(-1)^{i+1} s$, and that
    there are an odd number of alternating nonzero terms.
For the first term of each $A_i$, add $(-1)^is$.
That is, if the first term is $(-1)^{i+1}s$, then it is changed to $0$, and if
    it is $0$ then it is changed to $(-1)^is$.)
Call these modified blocks $\widetilde A_i$.
We see that each $\widetilde A_i$ has an even number of alternating nonzero terms.

For each $B_i$, notice that the last nonzero term, if it exists, is $(-1)^{i+1}s$, and that
    there are an even number of alternating nonzero terms.
For the last term of each $B_i$, add $(-1)^{i}s$.
That is, if the last term is $(-1)^{i+1}s$, then it is changed to $0$, and if
    it is $0$, then it is changed to $(-1)^{i}s$.
Call these modified blocks $\widetilde B_i$.
We see that these $\widetilde B_i$ have an odd number of alternating nonzero terms.

Since the changes from $B_i$ to $\widetilde B_i$ precisely balance the alterations from $A_i$ to $\widetilde A_i$, we note that $f(z) = \widetilde f(z)$.
We now have
\[
\widetilde A_{i,j} \in \begin{cases}
S_e(r_j+1, (-1)^is), & \textrm{$j$ odd},\\
Z(r_j-1), & \textrm{$j$ even}.
\end{cases}
\]
and 
\[
\widetilde B_{i,j} \in \begin{cases}
Z(r_j-1), & \textrm{$j$ odd},\\
S_e(r_j+1, (-1)^is), & \textrm{$j<2m$ even}, \\
S_o(r_j+1, (-1)^is), & \textrm{$j = 2m$}.
\end{cases}
\]

Let $\mathrm{Reverse}(v_1 v_2 \ldots v_{2m} ) = v_{2m} v_{2m-1} \ldots v_1$.
Define $A'_{i,j} = -\mathrm{Reverse}(\widetilde B_{i,2m+1-j})$ and 
        $B'_{i,j} = -\mathrm{Reverse}(\widetilde A_{i,2m+1-j})$.
Then $A'_i$ and $B'_i$ have the form
\[
A'_{i,j} \in \begin{cases}
S_o(r_{2m}+1, (-1)^{i+1}s), & j=1,\\
S_e(r_{2m+1-j}+1, (-1)^is), & \textrm{$j\geq3$ odd},\\
Z(r_{2m+1-j}-1), & \textrm{$j$ even},
\end{cases}
\]
and 
\[
B'_{i,j} \in \begin{cases}
Z(r_{2m+1-j}-1), & \textrm{$j$ odd},\\
S_e(r_{2m+1-j}+1, (-1)^is), & \textrm{$j$ even},
\end{cases}
\]
as required. 
Here $\mathbf{c'} = (r_{2m}, r_{2m-1}, \ldots, r_1)$.
This new polynomial thus produces a Reinhardt polygon in the same dihedral equivalence class as that of $f(z)$.
\end{proof}

For example, consider $n = 45 = 3 \cdot 3 \cdot 5$, with the partition $\mathbf{c} = (1,2)$.
We use
\[
A_1  =  \texttt{+00} \hspace{0.25in}
A_2  =  \texttt{0-0} \hspace{0.25in}
A_3  =  \texttt{0+0}
\]
and 
\[
B_1  =  \texttt{-+0} \hspace{0.25in}
B_2  =  \texttt{+0-} \hspace{0.25in}
B_3  =  \texttt{000} \hspace{0.25in}
B_4  =  \texttt{0+-} \hspace{0.25in}
B_5  =  \texttt{000}\,.
\]
Changing the first term of each $A_i$ and the last term of each $B_i$ 
    produces
\[
\widetilde A_1  =  \texttt{000} \hspace{0.25in}
\widetilde A_2  =  \texttt{+-0} \hspace{0.25in}
\widetilde A_3  =  \texttt{-+0}
\]
and 
\[
\widetilde B_1  =  \texttt{-+-} \hspace{0.25in}
\widetilde B_2  =  \texttt{+00} \hspace{0.25in}
\widetilde B_3  =  \texttt{00-} \hspace{0.25in}
\widetilde B_4  =  \texttt{0+0} \hspace{0.25in}
\widetilde B_5  =  \texttt{00-}\,.
\]
Reversing and changing the sign of each $\widetilde A_i$ and $\widetilde B_i$
    gives
\[
B_1'  =  \texttt{0-+} \hspace{0.25in}
B_2'  =  \texttt{0+-} \hspace{0.25in}
B_3'  =  \texttt{000} 
\]
and 
\[
A_1'  =  \texttt{+00} \hspace{0.25in}
A_2'  =  \texttt{0-0} \hspace{0.25in}
A_3'  =  \texttt{+00} \hspace{0.25in}
A_4'  =  \texttt{00-} \hspace{0.25in}
A_5'  =  \texttt{+-+}\,,
\]
which has the desired form.

\subsection{Remarks}\label{subsecRemarks}
First, we note that this method generalizes the construction employed in \cite{HareMossinghoff13} for creating Reinhardt polygons.
In effect, just one composition of $r$ was employed in that prior article, $\mathbf{c}=(1,r-1)$, so that each nonzero block of one of the polynomials in that construction was always \texttt{+-} or \texttt{-+}.
The rotated coefficient at the end of each $B_i$ was also required to be $0$.

Second, it may seem that we could generalize our construction further by employing more sub-blocks with an odd number of nonzero terms.
For example, when $i$ is even, we might allow selecting $B_{i,2}$ from $S_o(r_2+1,-1)$ and then $A_{i,3}$ from $S_o(r_3+1,1)$ (contrast with \eqref{eqnOverlaps}), in addition to what is allowed in our construction.
However, any polynomials we can create with this allowance can also be realized with the more restricted construction, by employing the overlapping position at the end of $B_{i,2}$ and the beginning of $A_{i,3}$ to change the parity in both sequences.
It follows that we may alter the parity of the number of nonzero values selected in both $A_{i,j}$ and $B_{i,j+1}$ (or both $A_{i,j}$ and $B_{i,j-1}$) without changing the the set of possible Reinhardt polynomials that may be constructed.
Since $F(z)$ must have an odd number of nonzero terms, we must specify that an odd number of nontrivial sub-blocks among the $A_{i,j}$ or $B_{i,j}$ are chosen to have an odd number of terms, for fixed $i$.
Due to the parity switches enabled by the overlaps between sub-blocks, we may assume without loss of generality that for fixed $i$ exactly one sub-block among the $A_{i,j}$ and $B_{i,j}$ is drawn with an odd number of terms, and we select the set $A_{i,1}$ in our construction.
The constraint $A_{1,1}(1)\neq0$ also prevents duplicate polynomials from being generated, due to the effect of the rotated element from the end of $B_q$.

\section{Counting Reinhardt polygons}\label{secCounting}

We determine the number of distinct Reinhardt polynomials for $n=pqr$ that may be produced by our construction, relative to a fixed composition of $r$ into an even number of parts.
We require one definition first: we say a sequence $v_1$, \ldots, $v_m$ is \textit{$d$-periodic} if $d\mid n$ and $v_k=-v_{k+d}$ for each index $k$ with $1\leq k\leq m-d$.

\begin{thm}\label{thmCountPolynomials}
Suppose $n=pqr$ with $p$ and $q$ distinct odd primes and $r\geq2$.
Let $\mathbf{c}=(r_1,\ldots,r_{2m})$ be a fixed composition of $r$, and let $r_e=\sum_{k=1}^m r_{2k}$ and $r_o=r-r_e$.
Then our construction produces $2^{r_o p + r_e q}$ distinct Reinhardt polynomials for $n$.
\end{thm}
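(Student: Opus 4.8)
The plan is to count the number of free binary choices in the construction, then subtract the number of tuples of choices that yield duplicate polynomials, showing the latter contributes nothing new. First I would count the raw number of selections. For a fixed block $A_i$ with $i\geq 2$, the sub-block $A_{i,1}$ is drawn from $S_o(r_1+1,(-1)^{i+1}s)$; the sub-blocks $A_{i,j}$ for odd $j\geq 3$ are drawn from $S_e(r_j+1,(-1)^is)$; and the even-indexed sub-blocks are forced (all zeros). A short combinatorial lemma is needed: $\lvert S_o(k,b)\rvert = \lvert S_e(k,b)\rvert = 2^{k-1}$ for $k\geq 1$. (Proof sketch: a sequence in $S_o(k,b)\cup S_e(k,b)$ of either parity is determined by choosing, for each of the $k$ positions, whether it is zero or nonzero — once the nonzero positions are fixed, the alternating-sign condition and the value $b$ of the first nonzero entry determine all signs — so there are $2^k$ sequences total over both parities, and by the involution that toggles the last position's nonzero/zero status, exactly half lie in each parity class; one checks the all-zero sequence is counted once on each side.) Thus each $A_{i,1}$ contributes $2^{r_1}$ choices and each $A_{i,j}$ with odd $j\geq3$ contributes $2^{r_j}$; the block $A_1$ is the exception, since $A_{1,1}(1)=s\neq0$ is forced, cutting its count to $2^{r_1-1}\cdot(\text{rest})$, but one must be careful — actually the constraint only removes the freedom in the first position, so $A_{1,1}$ ranges over $2^{r_1-1}$ sequences while the odd sub-blocks $A_{1,j}$ for $j\geq3$ are unconstrained.

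Next I would tally the product. Summing exponents over the $p$ blocks of $f_1$: each of the $p$ blocks contributes $\sum_{j\text{ odd}} r_j = r_o$ to the exponent, and the single constraint on $A_{1,1}$ subtracts $1$, giving $2^{r_o p - 1}$ choices for $f_1$. Similarly for $f_2$: in each of the $q$ blocks $B_i$, the odd-indexed sub-blocks are forced zero and each even-indexed $B_{i,j}$ ranges over $S_e(r_j+1,(-1)^is)$, contributing $\sum_{j\text{ even}} r_j = r_e$ to the exponent, so there are $2^{r_e q}$ choices for $f_2$ before the shift-and-negate operator $R$ (which is a bijection and changes nothing). The naive total is therefore $2^{r_o p - 1}\cdot 2^{r_e q} = 2^{r_o p + r_e q - 1}$, which is a factor of $2$ short of the claimed count, so the final and most delicate step is to show that distinct choices of the $(f_1,f_2)$ pair can coincide as polynomials $F(z)$, and that this happens in exactly a two-to-one fashion. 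The source of the collapse is the overlap cancellations described around \eqref{eqnOverlaps} in Section~\ref{subsecVerifying}: at a boundary between an all-zero sub-block and an adjacent $S_o$ or $S_e$ sub-block, a nonzero coefficient of one summand can land on a nonzero coefficient of the other with opposite sign, so toggling the corresponding end-entries in both an $A$ sub-block and the matching $B$ sub-block leaves $F(z)$ unchanged. I would make this precise by identifying the single global degree of freedom that is genuinely lost — overlap is the rotated element at the end of $B_q$ interacting with the start of $A_1$ (or, more symmetrically, the parity-switching moves noted in Section~\ref{subsecRemarks}) — and argue that modulo this one relation the map from choice-tuples to polynomials is injective. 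Equivalently: I would show the construction, before imposing $A_{1,1}(1)\neq0$, is exactly two-to-one onto its image (each polynomial arising from precisely the pair of choice-tuples related by the global parity switch), so imposing $A_{1,1}(1)\neq 0$ picks out a unique representative from each fiber; hence the count is $2^{r_o p + r_e q}$ as stated.

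The main obstacle is this last injectivity/two-to-one analysis: one must verify that aside from the overlap at the $B_q$-to-$A_1$ seam, no two distinct valid selections produce the same $F(z)$, which requires reading off, block by block, how the coefficient string of $F$ determines the sub-block contents. The diagram \eqref{eqnOverlaps} already shows overlaps occur only at zero-block boundaries and always between opposite signs, so within any block of $r$ coefficients one can recover each nonzero sub-block's sequence except possibly for its boundary entry where it abuts a zero sub-block of the partner polynomial; tracking these boundary ambiguities across all $pq$ blocks and checking they are all resolved by the alternating-sign global constraint — save for one residual binary ambiguity absorbed by $A_{1,1}(1)\neq0$ — is the crux. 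I expect this to be a careful but not deep bookkeeping argument, leveraging that $\gcd(p,q)=1$ forces the $A$-block and $B$-block indices to cycle through all residues, so every potential sub-block boundary is "seen" in some block of $F$ with a configuration that pins it down.
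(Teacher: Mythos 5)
Your raw count of $2^{r_o p + r_e q - 1}$ is correct, but only for a \emph{fixed} value of the initial sign $s\in\{-1,1\}$: the sets in \eqref{eqnAs} and \eqref{eqnBs} depend on $s$, and the missing factor of $2$ is simply the two choices of $s$, which you fix implicitly at the outset and never vary. Your proposed repair is logically backwards: if distinct choice-tuples coincided as polynomials, the number of distinct polynomials would be \emph{smaller} than the number of tuples, so no collapsing argument can raise $2^{r_o p + r_e q - 1}$ to $2^{r_o p + r_e q}$. The ``equivalent'' reformulation fares no better: dropping the constraint $A_{1,1}(1)\neq0$ gives $2^{r_o p + r_e q}$ tuples for fixed $s$, and a two-to-one map would land on only $2^{r_o p + r_e q - 1}$ polynomials, still short by a factor of $2$. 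Moreover the two-to-one claim itself is dubious as stated, since toggling $A_{1,1}(1)$ alone changes the parity of the nonzero terms of $A_{1,1}$ and takes you out of $S_o$; the parity switches described in the remarks of Section~\ref{subsecRemarks} pair a change in an $A$ sub-block with one in an adjacent $B$ sub-block, which is a different (and not obviously fixed-point-free) operation. What actually must be proved is that the \emph{constrained} map is injective, including across the two values of $s$; the case $s\neq s^*$ has to be excluded separately (one finds all boundary entries forced to $0$, contradicting $A_1(1)-A_1^*(1)=2s\neq0$), and then the count is $2\cdot 2^{r_o p + r_e q - 1}$.

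Your sketch of the injectivity bookkeeping is in the right spirit but omits the reduction that makes it tractable: if two selections yield the same $F$, then $g_1-g_1^*=g_2^*-g_2$ with the left side $pr$-periodic and the right side $qr$-periodic, hence both sides are $r$-periodic, and the whole question collapses to the single identity $A_1-A_1^*=R(B_1^*-B_1)$ on one block of length $r$. There the forced zeros pin down all but the $2m$ sub-block boundary positions, and those are resolved in order by propagating the alternating-sign and parity constraints through $A_{1,1},B_{1,2},A_{1,3},\dots$. Without this reduction, ``tracking boundary ambiguities across all $pq$ blocks'' is not a proof, and with it the $\gcd(p,q)=1$ hand-waving becomes unnecessary.
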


\begin{proof}
We first show that each Reinhardt polynomial constructed in this way is 
    unique, for fixed composition $\mathbf{c} = (r_1, \ldots, r_{2m})$.
We then count the number of such Reinhardt polynomials.
Given $n=pqr$ and $\mathbf{c}=(r_1,\ldots r_{2m})$ as in the statement of the theorem.
Suppose that $A_1$, \ldots, $A_p$ and $B_1$, \ldots, $B_q$ are selected according to the construction using the initial sign $s$, and these give rise to the polynomials $f_1(z)$ and $f_2(z)$, respectively.
Let $g_1(z) = f_1(z)\Phi_q(-z^{pr})$ and $g_2(z) = f_2(z)\Phi_p(-z^{qr})$.
Similarly, suppose $A_1^*$, \ldots, $A_p^*$ and $B_1^*$, \ldots, $B_q^*$ are constructed with initial sign $s^*$, producing polynomials $g_1^*(z)$ and $g_2^*(z)$.
Suppose that $g_1(z)+g_2(z)=g_1^*(z)+g_2^*(z)$.
Then $g_1(z)-g_1^*(z)=g_2^*(z)-g_2(z)$, and the left side is $pr$-periodic and the right side is $qr$-periodic, so both sides are $r$-periodic.
It suffices then to investigate when $A_1-A_1^* = R(B_1^*-B_1)$.
For this, note first that many terms of $A_1$ and $A_1^*$ are $0$ (the ones in the sub-blocks of even index), so the corresponding positions must match $B_1$ and $B_1^*$.
Likewise, required zeros in $B_1$ and $B_1^*$ allow us to deduce matching elements in $A_1$ and $A_1^*$.
This leaves only the positions with index $1+\sum_{i=1}^k r_i$ with $0\leq k<2m$.
Let $C(k)$ denote the $k$th term of the sequence $C$.
If $s=s^*$, then $A_{1,1}(1)=A_{1,1}^*(1)=s$, and the alternating sign requirement, together with the parity constraint on $A_{1,1}$, ensures that $A_{1,1}(r_1+1)=A_{1,1}^*(r_1+1)$, and therefore $B_{1,2}(1)=B_{1,2}^*(1)$.
In the same way, we find that $B_{1,2}(r_2+1)=B_{1,2}^*(r_2+1)$, and thus $A_{1,3}(1)=A_{1,3}^*(1)$.
Continuing in this way, we find that $A_1=A_1^*$ and $B_1=B_1^*$.

If $s\neq s^*$, then the fact that $A_{1,1}$ and $A_{1,1}^*$ begin with opposite signs, followed by $r_1-1$ matching values, implies that these values must all be $0$, due to the alternating sign requirement.
By the parity condition, we then conclude that $A_{1,1}(r_1+1)=A_{1,1}^*(r_1+1)=0$ as well.
Then $B_{1,2}(1)=B_{1,2}^*(1)$, and these must be $0$ as well since $s\neq s^*$, and in the same way one finds that both $B_{1,2}$ and $B_{1,2}^*$ are entirely $0$.
Continuing, one deduces that $A_1(k)=B_1(k)=0$ for $2\leq k\leq r$, but then $A_1(1)-A_1^*(1)=2s$, while $-(B_r^*(1)-B_r(1))=0$, a contradiction.

It remains to count the number of different ways to construct a polynomial $F(z)$ for $n$, for a fixed composition $\mathbf{c}$.
Since $\abs{S_e(k,t)}=\abs{S_o(k,t)}=2^{k-1}$ for any positive integer $k$ and fixed $t\in\{-1,1\}$, there are $2^{r_j}$ ways to select $A_{i,j}$ when $j$ is odd, except for $A_{1,1}$, for which the number is $2^{r_1-1}$.
Then there are $2^{r_j}$ ways to select $B_{i,j}$ when $j$ is even, for fixed $s$.
Therefore, there are $2^{r_o-1}$ ways to construct $A_1$, then $2^{r_o}$ ways for each subsequent block $A_i$, and $2^{r_e}$ ways for each block $B_i$.
Since the blocks $A_1$, \ldots, $A_p$ and $B_1$, \ldots, $B_q$ are constructed independently, and there are two choices for $s$, the formula follows.
\end{proof}

For convenience, we say a Reinhardt polynomial is sporadic if it corresponds to a sporadic Reinhardt polygon.
We next determine a lower bound on the number of sporadic Reinhardt polynomials produced by our method for a fixed composition of $r$ into an even number of parts.

\begin{thm}\label{thmCountSporadicPolynomials}
Suppose $n=pqr$ with $p$ and $q$ distinct odd primes and $r\geq2$, let $\mathbf{c}=(r_1,\ldots,r_{2m})$ be a fixed composition of $r$, and let $r_e$ and $r_o$ be defined as above.
If $p$ and $q$ are the only odd prime divisors of $n$, then this construction produces exactly $2^r(2^{r_o(p-1)}-1)(2^{r_e(q-1)}-1)$ different sporadic Reinhardt polynomials.
Otherwise, let $t$ denote the smallest odd prime divisor of $n$ besides $p$ and $q$, and set $U = \lfloor2^{(2m pq + (r_o -m) p + (r_e-m) q)/t}\rfloor$.
Then this construction produces at least $2^r(2^{r_o(p-1)}-1)(2^{r_e(q-1)}-1)-U$ different sporadic Reinhardt polynomials.
\end{thm}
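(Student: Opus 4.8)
The plan is to convert periodicity of the polygon into a divisibility condition on the Reinhardt polynomial, to identify exactly the ``expected'' periodic outputs of the construction, and then in the general case to bound the remaining periodic outputs by $U$.

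First I would record the translation of periodicity. By the correspondence between dihedral compositions and Reinhardt polynomials (see \cite{Mossinghoff06AMM}), a Reinhardt polynomial $F$ for $n$ yields a periodic polygon exactly when its coefficient sequence is $(n/t)$-periodic in the sense defined above for some odd prime $t\mid n$; equivalently $F(z)=G(z)\,\frac{z^n+1}{z^{n/t}+1}$ with $\deg G<n/t$. Indeed, if the composition of $n$ is a $k$-fold repetition of a composition of a proper divisor, then $k$ is odd, so some odd prime $t$ divides $k$, and $F$ is then $(n/t)$-periodic; conversely any $(n/t)$-periodic $F$ has a composition that is a $t$-fold repetition. (A two-fold repetition cannot occur, as it would produce an even number of parts; on the polynomial side $F$ is never $(n/2)$-periodic, since that would force $F(1)=0$, impossible as $F$ has an odd number of nonzero coefficients alternating in sign.) So it suffices to count, for a fixed composition $\mathbf{c}$ of $r$, those constructed $F$ that are $(n/t)$-periodic for some odd prime $t\mid n$.

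Next I would treat the primes $p$ and $q$. Write $g_1=f_1(z)\Phi_q(-z^{pr})$ and $g_2=f_2(z)\Phi_p(-z^{qr})$, so $F=g_1+g_2$. Since $g_2$ is obtained from $f_2$ (of degree below $qr$) by repeating its $qr$-block of coefficients with alternating signs $p$ times, $g_2$ is automatically $(qr)$-periodic; likewise $g_1$ is automatically $(pr)$-periodic. Using $\gcd(z^{pr}+1,z^{qr}+1)=z^{r}+1$ (both $p$ and $q$ odd) one checks that $g_1$ is $(qr)$-periodic precisely when $f_1$ is $r$-periodic, i.e.\ when $A_{i+1}=-A_i$ for all $i$, so $A_i=(-1)^{i-1}A_1$; hence $F$, being the sum of $g_1$ and the automatically $(qr)$-periodic $g_2$, is $(qr)$-periodic precisely when $f_1$ is $r$-periodic. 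Symmetrically $F$ is $(pr)$-periodic precisely when $f_2$ is $r$-periodic, i.e.\ $B_i=(-1)^{i-1}B_1$. Let $\mathcal{P}$ and $\mathcal{Q}$ be the sets of constructed $F$ with $f_1$, respectively $f_2$, $r$-periodic. Counting exactly as in the proof of Theorem~\ref{thmCountPolynomials} — once $A_1$ is chosen the remaining $A_i$ are forced, and similarly for the $B_i$ — gives $\abs{\mathcal{P}}=2^{r_o+r_e q}$, $\abs{\mathcal{Q}}=2^{r_e+r_o p}$, and $\abs{\mathcal{P}\cap\mathcal{Q}}=2^{r}$. When $p$ and $q$ are the only odd primes dividing $n$, the only odd primes $t\mid n$ are $p$ and $q$, and since $n/p=qr$ and $n/q=pr$, the periodic constructed polynomials are exactly $\mathcal{P}\cup\mathcal{Q}$; by inclusion–exclusion together with the distinctness established in Theorem~\ref{thmCountPolynomials}, the number of sporadic ones is
\[
2^{r_o p+r_e q}-2^{r_o+r_e q}-2^{r_e+r_o p}+2^{r}=2^{r}\bigl(2^{r_o(p-1)}-1\bigr)\bigl(2^{r_e(q-1)}-1\bigr),
\]
the final equality by direct expansion using $r=r_o+r_e$.

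Finally, in the general case every odd prime dividing $n$ other than $p$ and $q$ divides $r$; let $t$ be the least such. A periodic constructed $F$ outside $\mathcal{P}\cup\mathcal{Q}$ must be $(n/t')$-periodic for some odd prime $t'\mid r$ with $t'\notin\{p,q\}$, and every such $t'$ satisfies $t'\geq t$. To bound these I would pass to the ring $\mathbb{Z}[z]/(z^n+1)$ and set $\rho_1\equiv(z^{n/t'}+1)g_1$ and $\rho_2\equiv(z^{n/t'}+1)g_2$; then $F$ is $(n/t')$-periodic if and only if $\rho_1=-\rho_2$. The element $\rho_1$ depends only on $f_1$ — hence only on the blocks $A_1,\dots,A_p$ and the sign $s$ — while $\rho_2$ depends only on $f_2$. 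Moreover, since $\frac{z^n+1}{z^{pr}+1}$ kills every cyclotomic factor of the ring except those dividing $z^{pr}+1$, and $z^{n/t'}+1$ then kills those dividing $z^{pr/t'}+1$ (using $\gcd(pr,n/t')=pr/t'$), the fibres of $f_1\mapsto\rho_1$ are exactly the cosets of the $(pr/t')$-periodic polynomials inside the space of valid $f_1$, and analogously for $\rho_2$ with $qr/t'$ in place of $pr/t'$. Counting the realizable pairs $(\rho_1,-\rho_1)$ then reduces to counting how many independent coefficient choices in the rigid sub-block layout of $\mathbf{c}$ survive under this coarser periodicity; a careful tally — whose terms correspond to the $2m$ sub-block boundaries in each of the $pq$ blocks of $F$ and to the non-forced positions contributed by $g_1$ and by $g_2$, all divided by $t'\geq t$ — shows the total number of such $F$, over all admissible $t'$, is at most $\lfloor 2^{(2mpq+(r_o-m)p+(r_e-m)q)/t}\rfloor=U$. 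Subtracting from the count of the previous paragraph gives the asserted lower bound.

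The translation of periodicity and the clean count for the primes $p$ and $q$ are routine. The step I expect to be the main obstacle is the last one: controlling precisely how $(n/t')$-periodicity interacts with the sub-block structure so that the surviving degrees of freedom are bounded by $(2mpq+(r_o-m)p+(r_e-m)q)/t$ rather than by a cruder fraction of the $r_o p+r_e q$ total free parameters, and simultaneously arranging that a single estimate at the least prime $t$ absorbs the contributions from every admissible $t'$.
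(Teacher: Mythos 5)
Your handling of the cases $b\in\{p,q\}$ is correct and essentially identical to the paper's: the reduction to $b$ an odd prime, the characterization of $(qr)$-periodicity of $F$ as $r$-periodicity of $f_1$ (equivalently $A_i=(-1)^{i-1}A_1$), the counts $2^{r_o+r_eq}$, $2^{r_e+r_op}$, $2^{r}$, and the inclusion--exclusion all match. The genuine gap is in the third case. There the entire content of the theorem's second assertion --- that the number of $(n/b)$-periodic constructed polynomials with $b\notin\{p,q\}$ is at most $U=\lfloor2^{(2mpq+(r_o-m)p+(r_e-m)q)/t}\rfloor$ --- is deferred to ``a careful tally \ldots\ shows,'' and you yourself flag this step as the main obstacle. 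The quotient-ring framing ($\rho_1=-\rho_2$, fibres as cosets of $(pr/t')$-periodic polynomials) does not by itself produce that exponent: nothing in it explains why the number of surviving degrees of freedom is the stated quantity divided by $t$ rather than some cruder fraction of the $r_op+r_eq$ free parameters.

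The missing mechanism is the interplay between the sub-block structure and the two periodicities. Write $r=bc$ and fix a position $j$ lying inside an odd-indexed sub-block and not at a sub-block boundary (an overlap); at such a position the $B$-contribution is forced to be $0$, so $v_j$ is determined by $g_1$ alone and therefore satisfies $v_j=(-1)^kv_{j+prk}=(-1)^kv_{j+pbck}$. The assumed $(n/b)=pqc$-periodicity gives $v_j=(-1)^kv_{j+pqck}$, and since $\gcd(b,q)=1$ these combine to $v_j=(-1)^kv_{j+pck}$ --- a period smaller by the factor $b\geq t$. This is what compresses the $(r_o-m)p$ free positions of that type to at most $(r_o-m)p/t$ independent choices; the even sub-blocks give $(r_e-m)q/t$ symmetrically, and the $2m$ overlap positions per length-$r$ block, controlled only by the $pqc$-periodicity of $F$ itself, give $2mpq/t$. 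Without isolating which positions are ``pure $g_1$,'' which are ``pure $g_2$,'' and which are overlaps, and deriving the finer periods $pc$ and $qc$ at the first two kinds, there is no route to the stated bound, so as written the proposal establishes the exact formula in the first case but not the lower bound in the second.
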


\begin{proof}
Suppose that $F(z)=\sum_{k=1}^n v_k z^{k-1}$ is a Reinhardt polynomial constructed by using this method, with $n=pqr$ and $\mathbf{c}$ as in the statement of the theorem.
Suppose further that $f_1(z)$ and $f_2(z)$ are constructed to build $F(z)$, with sequences $A_1$, \ldots, $A_p$ corresponding to $f_1(z)$ and sequences $B_1$, \ldots, $B_q$ for $f_2(z)$.
Let $g_1(z) = f_1(z)\Phi_q(-z^{pr})$ and $g_2(z) = f_2(z)\Phi_p(-z^{qr})$, so that $F(z)=g_1(z)+g_2(z)$.
Suppose further that $F(z)$ corresponds to a periodic Reinhardt polygon, so there exists a positive integer $d\mid n$ so that $v_k=-v_{k+d}$ for $1\leq k\leq n-d$.
Let $b=n/d$, and let $\ell$ be the number of nonzero $v_k$ with $1 \leq k \leq d$.
We see that there are $b\ell$ nonzero $v_k$ for $1 \leq k \leq n$ 
    by periodicity, and since this number is odd, then so are $b$ and $\ell$.
By replacing $d$ with an odd multiple of it if necessary, we may assume that 
    $b$ is an odd prime.
We consider three cases.

First, suppose $b=p$, so that $d=qr$.
Since $F(z)$ and $g_2(z)$ are both $qr$-periodic, it follows that $g_1(z)$ is as well.
Thus, for each $i$ with $1\leq i\leq p$, we have that $A_i=A_{i+2kq}$ and $A_i=-A_{i+(2k+1)q}$ for $1\leq k\leq(p-1)/2$, where the indices are taken modulo $p$ (using the residue system $\{1,\ldots,p\}$).
It follows that $A_i=(-1)^{i+j}A_j$ for $1\leq i\leq j\leq p$, and thus there are $2^{r_o}$ ways to construct $g_1(z)$, and thus $2^{r_o+r_e q}$ such polynomials $F(z)$.
Second, suppose $b=q$.
In the same way, we conclude that $B_i=(-1)^{i+j}B_j$ for $1\leq i\leq j\leq q$, and we find that there are $2^{r_e+r_o p}$ ways to construct $F(z)$ in this case.
It should be noted that it is possible for $F(z)$ to be both $pr$- and $qr$-periodic.
We will account for this later.

Third, suppose $b \not\in\{p,q\}$, so $r=b c$ for some positive integer $c$, and suppose further that $F(z)$ is neither $pr$-periodic nor $qr$-periodic.
Assume that $r_i \geq 2$ for some $i$.
Let $j$ be an index such that $v_j$ is in both $A_{k,i}$ and $B_{k,i}$ for some $k$, i.e., $j$ is not in an overlapping section.
We know that such a $j$ exists since $r_i \geq 2$.
Assume for now that $i$ is odd.
Then we see that $v_j \neq 0$ if and only if the corresponding term in $A_{k,i}$ is nonzero, 
    and further the corresponding term in $B_{k,i}$ is zero.
We then see that $v_j = (-1)^{k} v_{j+p r k} = (-1)^k v_{j + p b c k}$ for all $k$ by construction,
    as $g_1(z)$ is $pr$-periodic.
Further, by the assumption that $F(z)$ is $pqc$-periodic, we see that 
    $v_j = (-1)^{k} v_{j + p q c k}$ for all $k$.
As $\gcd(b, q) = 1$ we then see that $v_j = (-1)^{k} v_{j + p c k}$ for all $k$.
This adds considerable additional structure to a polynomial having this property.
In a similar fashion, if $i$ is even, we find that 
    $v_j = (-1)^{k} v_{j + q c k}$ for all $k$.

We may now bound the number of cases where $b \not\in \{p,q\}$.
Let $j$ be an index such that $v_j\neq0$ is in both $A_{k,i}$ and $B_{k,i}$ for some $k$
    with $i$ odd.
We see in a block of size $r$ that there are at most $r_o - m$ of these cases.
At most $p c/r \leq p/t$ of these blocks of size $r$ are needed to completely determine
    these $v_j$, since $b=r/c\geq t$.
Hence we have at most $2^{(r_o -m) p/t}$ choices for these
    $v_j$.
Similarly, we have at most $2^{(r_e-m) q/t}$ choices for the $v_j$ where the corresponding
    $i$ are even.
Last, for those nonzero $v_j$ with $j$ lying in an overlap, we observe that there are at most $2m$ such indices in a block of length $r$, and since the $v_j$ are $pqc$-periodic, we find that there are at most $2^{2m pq/t}$ choices for the $v_j$ in this category.
This produces the upper bound $U$ for the total number of cases where $b \not \in \{p,q\}$.

Since there are $2^r$ polynomials constructed that are both $pr$-periodic and $qr$-periodic, by using Theorem~\ref{thmCountPolynomials} we conclude that if $p$ and $q$ are the only odd prime divisors of $n$, then the number of polynomials that may be constructed by our method which correspond to sporadic Reinhardt polygons is precisely
\[
2^{r_o p + r_e q} - 2^{r_o+r_e q} - 2^{r_e+r_o p} + 2^r
= 2^r(2^{r_o(p-1)}-1)(2^{r_e(q-1)}-1),
\]
and otherwise we obtain the lower bound $2^r(2^{r_o(p-1)}-1)(2^{r_e(q-1)}-1) - U$.
\end{proof}

Next, we use this result to determine a lower bound for the number of sporadic Reinhardt $n$-gons which are produced by this construction for a fixed composition, and which are distinct under the dihedral action.
For this, we define the \textit{period} of a composition $\mathbf{c}$, denoted $\pi(\mathbf{c})$, as the smallest positive even integer $k$ for which $L^k(\mathbf{c})=\mathbf{c}$, where $L$ denotes the action of cyclically shifting the elements of a composition to the left by one position.

\begin{cor}\label{corBoundSporadicPolygons}
Suppose $n=pqr$ with $p$ and $q$ distinct odd primes, $r\geq2$, $\mathbf{c}$ is a fixed composition of $r$ into an even number of parts, and $r_e$, $r_o$, and $U$ are defined as above.
Set $U=0$ if $p$ and $q$ are the only odd prime divisors of $n$, and let $v = \sum_{i=1}^{\pi(\mathbf{c})} r_i$.
Then
\begin{equation}\label{eqnBoundSporadicPolygons}
E_1(n) \geq \frac{v}{r}\left(2^{r-2}\cdot\frac{2^{r_o(p-1)}-1}{p}\cdot\frac{2^{r_e(q-1)}-1}{q} - \frac{U}{4pq}\right).
\end{equation}
\end{cor}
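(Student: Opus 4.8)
The plan is to convert the count of distinct sporadic Reinhardt \emph{polynomials} from Theorem~\ref{thmCountSporadicPolynomials} into a count of distinct sporadic Reinhardt \emph{polygons}, i.e.\ dihedral equivalence classes, for the fixed composition $\mathbf{c}$. The principal issue is that several of the constructed polynomials may collapse to the same dihedral class, so I need an upper bound on the size of each such class, and this is where the period $\pi(\mathbf{c})$ enters.

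First I would recall that the dihedral group acting on a composition of $n$ into $\ell$ parts has order $2\ell$ (rotations of the $\ell$-part cyclic composition, together with reflection), so every dihedral equivalence class of Reinhardt polygons arising from our construction contains at most $2\ell$ compositions, hence at most $2\ell$ of our polynomials, where $\ell$ is the number of parts in the associated composition. However, this bound is too weak; the refinement is that the polynomials we build are not arbitrary — their coefficient pattern is organized into $pq$ blocks of length $r$, each block reflecting the sub-block structure dictated by $\mathbf{c}$. Because the whole $n$-coefficient string is assembled from a composition $\mathbf{c}$ that is itself repeated (in the block-within-block sense) $pq$ times up to the $pr$- and $qr$-periodic modifications, the cyclic rotations that can possibly carry one constructed polynomial to another are constrained to shifts that respect the internal period of $\mathbf{c}$. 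Concretely, a left shift of the coefficient string by one block of size $r$ corresponds to $L^2$ acting on $\mathbf{c}$ (two parts of $\mathbf{c}$ are consumed per... more precisely, one must track how a block rotation permutes the sub-block labels), and the smallest shift returning $\mathbf{c}$ to itself is $\pi(\mathbf{c})$ parts, equivalently $v=\sum_{i=1}^{\pi(\mathbf{c})}r_i$ coefficient positions. So the number of genuinely distinct block-rotations of a constructed polynomial that remain in our family is $r/v$ rather than $pqr/\text{(something)}$; combined with the $\pm1$ scaling (the sign $s$), the reflection, and the rotation within a block, the orbit of any constructed polynomial under the dihedral action meets our family in at most $4pq\cdot(r/v)$ polynomials. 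I would make this precise by showing that if two constructed polynomials are dihedrally equivalent, the equivalence must be realized by such a block-respecting symmetry, using the rigidity already exploited in the uniqueness half of Theorem~\ref{thmCountPolynomials} (the forced-zero positions in even-indexed sub-blocks pin down where block boundaries can go).

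Granting that each dihedral class contains at most $4pq\cdot(r/v)$ of the constructed sporadic polynomials, the number of distinct sporadic \emph{polygons} from composition $\mathbf{c}$ is at least
\[
\frac{1}{4pq\,(r/v)}\Bigl(2^r(2^{r_o(p-1)}-1)(2^{r_e(q-1)}-1)-U\Bigr)
= \frac{v}{4pqr}\Bigl(2^r(2^{r_o(p-1)}-1)(2^{r_e(q-1)}-1)-U\Bigr),
\]
where the bracketed quantity is the count (or lower bound, if $U>0$) from Theorem~\ref{thmCountSporadicPolynomials}. Rearranging, $2^r/(4pq)=2^{r-2}/(pq)$ and distributing the factors of $p$ and $q$ onto the two Fermat-quotient-like numerators gives exactly \eqref{eqnBoundSporadicPolygons}. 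Since every sporadic Reinhardt $n$-gon produced this way is counted in $E_1(n)$, this yields the claimed inequality.

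The main obstacle I anticipate is the rigidity argument bounding the orbit size: one must verify carefully that no "accidental" dihedral coincidence can occur via a rotation that does not align block boundaries with block boundaries. I would handle this by the same device used in the proof of Theorem~\ref{thmCountPolynomials} — the mandatory zero sub-blocks (the $Z(r_j-1)$ pieces for even $j$ in the $A$-blocks and odd $j$ in the $B$-blocks) force any coefficient-string isometry between two members of the family to send zero-runs to zero-runs of the prescribed lengths, which constrains the shift to be a multiple of the relevant offsets and ultimately to respect $\mathbf{c}$ up to its period; a reflection argument handles the flip. The bookkeeping of exactly which shifts survive (and hence the clean constant $4pq\,r/v$) is the delicate part, but it is combinatorial rather than deep.
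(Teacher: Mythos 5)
Your proposal is correct and follows essentially the same route as the paper: divide the polynomial count of Theorem~\ref{thmCountSporadicPolynomials} by $pqr/v$ for cyclic shifts (using closure of the family under the shift by $v$ positions) and by $4$ for negation and reversal, yielding \eqref{eqnBoundSporadicPolygons} exactly. The "rigidity" point you flag as the main obstacle is simply asserted in the paper's proof, which only adds the remark that reciprocal (palindromic) polynomials make the bound more conservative rather than invalid.
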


\begin{proof}
Each of the polynomials from Theorem~\ref{thmCountSporadicPolynomials} corresponding to a sporadic Reinhardt polygon has $n$ different possible cyclic shifts, each of which is another Reinhardt polynomial, but shifting by $v$ positions to the left produces another polynomial in the family that we construct, so we must divide by $pq r/v$ to account for cyclic symmetries.
We must also divide by $2$ to account for the fact that $F(z)$ and $-F(z)$ correspond to the same Reinhardt polygon, and we divide by another factor of $2$ so that we are sure to place $F(z)$ and its reversal $z^n F(1/z)$ in the same equivalence class.
This accounts for flips in the dihedral action.
It is possible that some polynomials $F(z)$ that we produce have a palindromic coefficient pattern (such a polynomial is often called \textit{reciprocal}), but this only makes our lower bound on $E_1(n)$ more conservative.
The bound follows.
\end{proof}

We remark that the number of $(n/b)$-periodic polynomials that are
    neither $pr$-periodic nor $qr$-periodic is overestimated by the quantity $U$ of Theorem~\ref{thmCountSporadicPolynomials}.
For example, if $n = 3\cdot 5\cdot 7$ with $p = 5$, $q= 7$, and $r = t = 3$, 
    and $\mathbf{c} = (1,2)$, then our upper bound $U$ for the number of $35$-periodic polynomials constructible
    in this way is
    \[ U
         = \lfloor2^{(2 \cdot 5 \cdot 7 + (1-1) \cdot 5 + (2-1)\cdot 7)/3}\rfloor
         = \lfloor2^{77/3}\rfloor
         = 53\,264\,340. \]
The actual number of $35$-periodic polynomials constructible with these parameters is $0$.

It is possible for the number of $(n/b)$-periodic polynomials to be positive.
Let $n = 210 = 2 \cdot 3 \cdot 5 \cdot 7$ with $p = 3$, $q = 7$, 
    and $r = 10$, and let $\mathbf{c} = (1,1,1,1,1,1,1,1,1,1)$.
We select 
\[ 
A_1  =  \texttt{-0\,+-\,+-\,+-\,00} \hspace{0.25in}
A_2  =  \texttt{0+\,-+\,00\,00\,00} \hspace{0.25in}
A_3  =  \texttt{-0\,00\,00\,+-\,+-} 
\]
and 
\begin{equation*}
\begin{split}
B_1  &=  \texttt{0\,00\,00\,00\,+-\,0} \hspace{0.25in}
B_2  =  \texttt{0\,-+\,-+\,00\,00\,0} \hspace{0.25in}
B_3  =  \texttt{0\,00\,+-\,+-\,+-\,0} \\
B_4  &=  \texttt{0\,-+\,-+\,00\,-+\,-} \hspace{0.25in}
B_5  =  \texttt{+\,00\,+-\,+-\,+-\,0} \hspace{0.25in}
B_6  =  \texttt{0\,00\,00\,00\,-+\,-} \\
B_7  &=  \texttt{+\,00\,+-\,00\,00\,0}\,.
\end{split}
\end{equation*}
From these sequences we obtain a Reinhardt polynomial
    with period 42, which we designate by $f(z) \Phi_{5}(-z^{42})$, where the coefficients of $f(z)$
    are
  \[ \texttt{+0-+-+-0+00000-00000+00-+-000+-+000-+0-+-+}\,. \]

In addition, we remark that reciprocal polynomials do in fact arise in our method, although they are rather rare.
Figure~\ref{figRecip45s} exhibits the polygons for three of the $48$ essentially different reciprocal polynomials that may be constructed for $n=45$.
One may observe the evident reflective symmetry in each.

\begin{figure}[tbh]
\caption{Some sporadic Reinhardt polygons with $n=45$ sides having a reflective symmetry.}\label{figRecip45s}
\begin{tabular}{ccc}
\includegraphics[width=\rheinwidth]{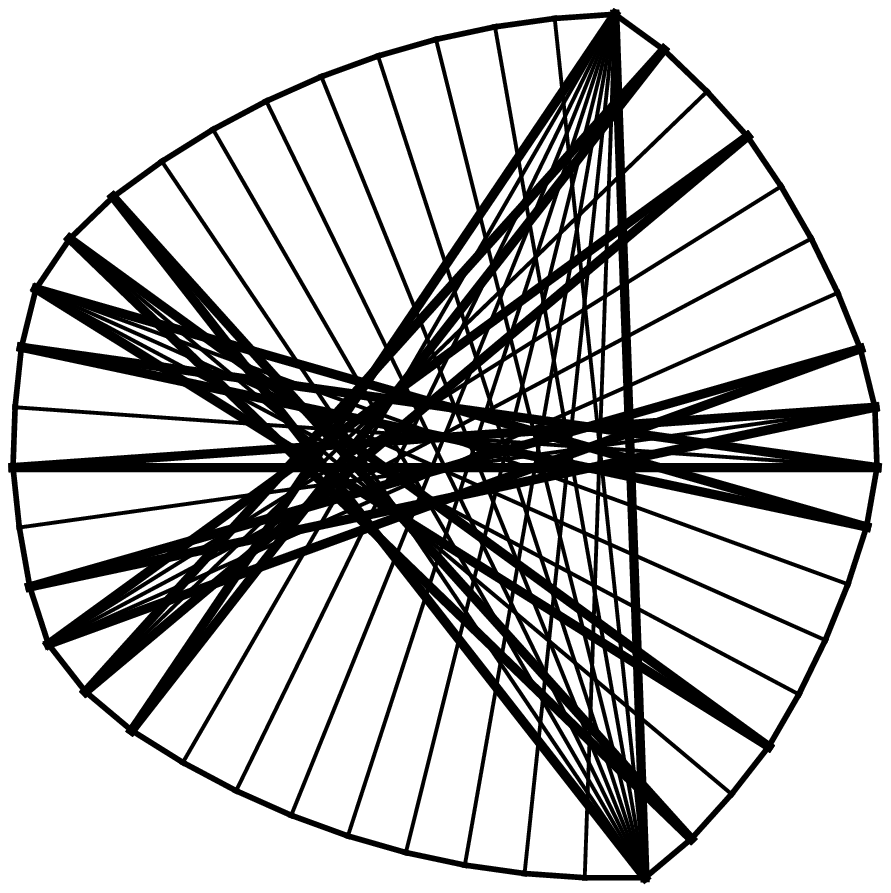} &
\includegraphics[width=\rheinwidth]{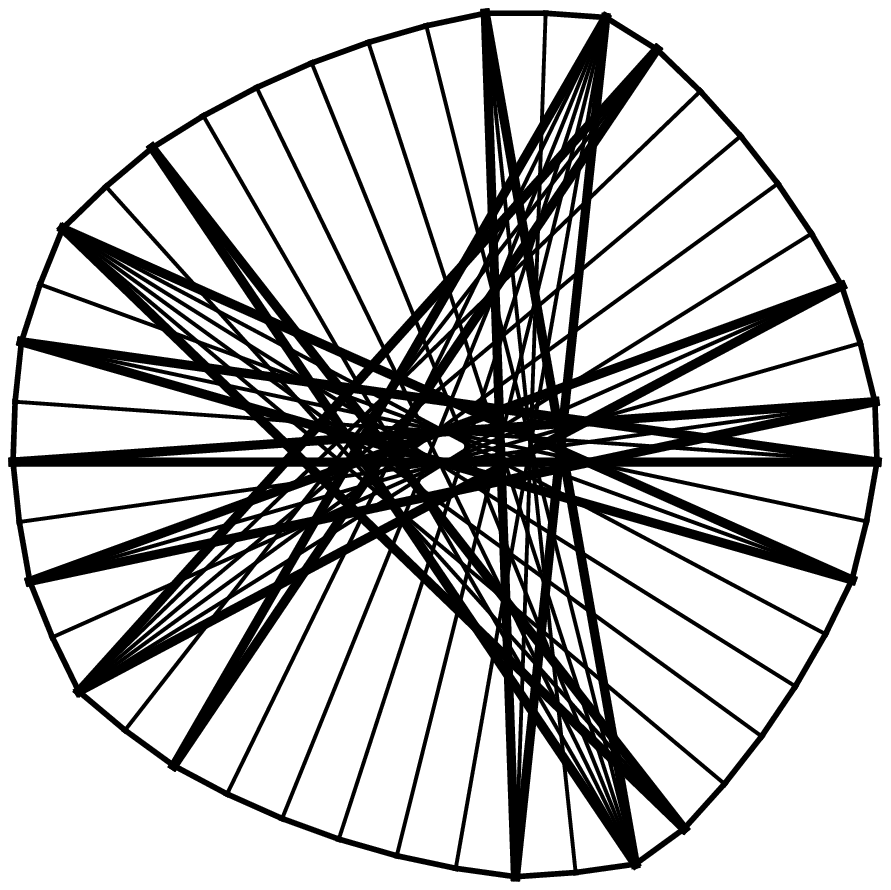} &
\includegraphics[width=\rheinwidth]{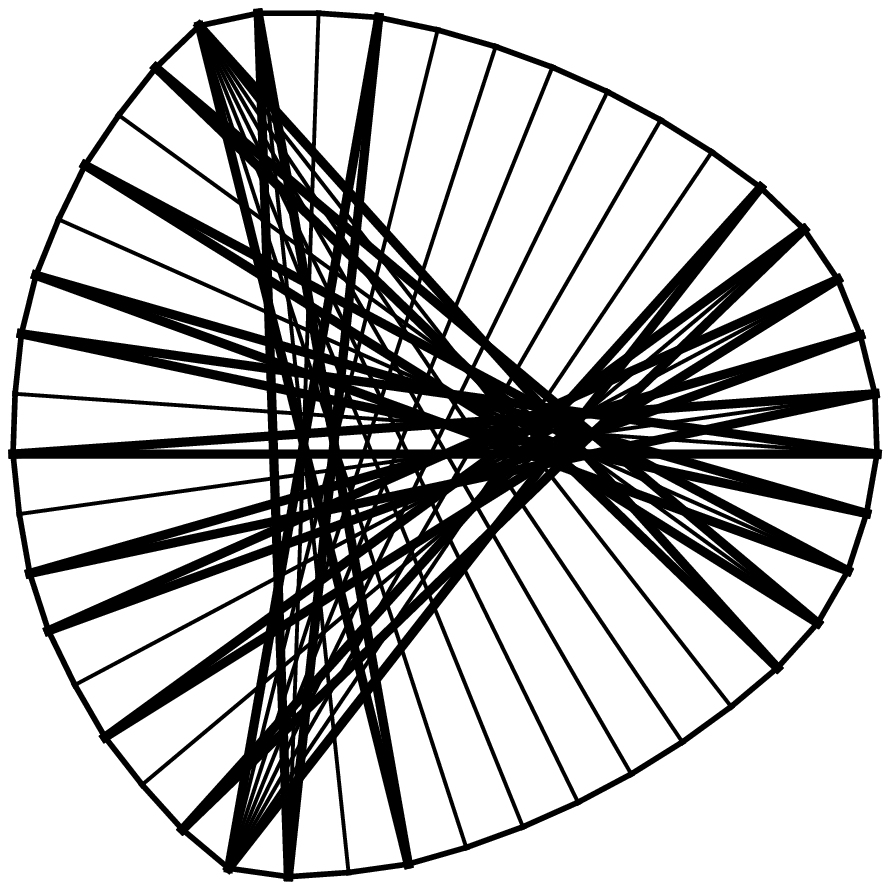}\\
\small $[2,1,1,4,1,2,1,1,9,$ &
\small $[2,2,2,5,2,1,6,2,$ &
\small $[2,1,1,1,2,1,2,1,1,7,1,2,$\\
\small \quad $\;9,1,1,2,1,4,1,1,2,1]$ &
\small \quad $\;2,6,1,2,5,2,2,2,1]$ &
\small \quad $\;2,1,7,1,1,2,1,2,1,1,1,2,1]$
\end{tabular}
\end{figure}

Next, we prove that there are almost always more sporadic Reinhardt polygons with $n$ sides than periodic ones.

\begin{proof}[Proof of Theorem~\ref{thmSporadicsWin}]
From the Prime Number Theorem, the density of positive integers $n$ having the form $n=2^a p^b$, with $p$ an odd prime, $a\geq1$, and $b\geq0$, or $n=2^a pq$, with $a\geq0$ and $p$ and $q$ distinct odd primes, is $0$.
We may assume then that $n$ has the form $n=pqr$, with $p$ and $q$ the two smallest distinct odd prime divisors of $n$, and that $r$ has at least one odd prime divisor.
Choose the composition $\mathbf{c}=(1,r-1)$ of $r$, so that $v = r$ in Corollary~\ref{corBoundSporadicPolygons}.
Since the smallest odd prime divisor of $r$ is clearly at least $3$, from \eqref{eqnBoundSporadicPolygons} have
\[
E_1(n) \geq\frac{2^r(2^{p-1}-1)(2^{(q-1)(r-1)}-1) - W}{4pq}
\]
where
\[
W = 2^{(2m pq + (r_o-m) p + (r_e-m) q)/3} = 2^{(2pq + (r-2)q)/3}.
\]
Next, since $W \ll 2^r(2^{p-1}-1)(2^{(q-1)(r-1)}-1)$ as $r\to\infty$, for fixed $p$ and $q$ and $n=pqr$ we find that
\[
E_1(n) \gg \frac{2^{n/p}}{4pq}\cdot (2^{p-1}-1)(2^{1-q}-2^{r(1-q)}),
\]
and using \eqref{eqnPeriodic}, we conclude that 
\[
\frac{E_1(n)}{E_0(n)} > \frac{r(2^{p-1}-1)}{p2^{q-1}}\left(1+o(1)\right)
\]
as $r$ grows large.
It follows that $E_1(n)>E_0(n)$ for all such $n$ with $r>cp2^{q-p}$, for some positive constant $c$.
We complete the proof by considering the density of positive integers $n=pqr$ having bounded gap between their two smallest odd prime divisors.

Clearly, the density of positive integers that are not divisible by a fixed prime $p$ is $1-1/p$, so the density of integers that have no odd prime divisor less than $m$ is
\[
\prod_{\substack{3\leq p \leq m\\p \textrm{\ prime}}} \left(1-\frac{1}{p}\right) = \frac{2+o(1)}{\log m}.
\]
It follows that the density of positive integers that are divisible by exactly one odd prime $p<m$ is
\[
\sum_{\substack{3\leq p\leq m\\p \textrm{\ prime}}}\frac{1}{p}\cdot\frac{1}{1-1/p}\cdot\frac{2}{\log m}\left(1+o(1)\right) = \frac{2\log\log m}{\log m}\left(1+o(1)\right).
\]
Thus, the density of integers having at least two distinct odd prime divisors less than $m$ is $1-\frac{2\log\log m}{\log m}(1+o(1))$, and any such integer $n$ has $q-p<m$ if $p$ and $q$ are the two smallest odd prime divisors of $n$.
We conclude that the density of positive integers $n=pqr$ whose two smallest odd prime divisors satisfy $q-p<m$ and for which $r>cm2^m$ approaches $1$ as $m\to\infty$.
\end{proof}

\section{Computations}\label{secComputations}

We implemented the construction described in Section~\ref{secConstruction} in \texttt{Maple}, and used it to create a large number of sporadic Reinhardt polygons for various values of $n$.
Given a qualifying integer $n$, then for each choice of distinct odd primes $p$ and $q$ dividing $n$, we set $r=n/pq$, and then for each composition of $r$ into an even number of parts, we compute all of the associated Reinhardt polynomials arising from our method.
Each polynomial constructed is normalized with respect to sign and with respect to its dihedral equivalence class, and all of the distinct Reinhardt polygons produced are recorded.

Let $\hat{E}_1(n)$ denote the number of sporadic Reinhardt polygons with $n$ sides up to dihedral equivalence that may be constructed by using the method of \cite{HareMossinghoff13}, and let $\ddot{E}_1(n)$ denote the number that can be built by using the more general method developed here.
In \cite{Mossinghoff11}, the exact value of $E_1(n)$ was computed for $24$ different values of $n$ that exhibit sporadic examples, and in \cite{HareMossinghoff13} it was shown that $\hat{E}_1(n)=E_1(n)$ for nineteen of these values.
Using our new method, we find that $\ddot{E}_1(n)=E_1(n)$ for the remaining five values.
It should be noted that $\ddot{E}_1(105)  \neq E_1(105)$, although the 
    exact value for $E_1(105)$ is not known.
The results for these five values are summarized in Table~\ref{tableCompleteCounts}.

\begin{table}[tbh]
\caption{Number of sporadic Reinhardt polygons constructed.}\label{tableCompleteCounts}
\begin{tabular}{cccccc}
$n$ & Factorization & $r$ & $\hat{E}_1(n)$ & $\ddot{E}_1(n)$ & $E_1(n)$\\\hline
$60$\fixspacetop & $2^2\cdot3\cdot5$ & $4$ & $3\,492$ & $4\,392$ & $4\,392$\\
$75$  & $3\cdot5^2$ & $5$ & $107\,400$ & $153\,660$ & $153\,660$\\
$84$  & $2^2\cdot3\cdot7$ & $4$ & $150\,444$ & $161\,028$ & $161\,028$\\
$90$  & $2\cdot3^2\cdot5$ & $6$& $3\,371\,568$ & $5\,385\,768$ & $5\,385\,768$\\
$140$ & $2^2\cdot5\cdot7$ & $4$ & $478\,548$ & $633\,528$ & $633\,528$
\end{tabular}
\end{table}

The nineteen other integers $n$ for which the value of $E_1(n)$ was computed precisely in \cite{HareMossinghoff13} all have the form $n=pqr$ with $r\in\{2,3\}$, and fourteen of these have $r=2$.
We compute $E_1(n)$ for five additional values of $n$ of the form $n=2pq$, and summarize these results in Table~\ref{tableSporadics2pq}.
The values newly computed here are marked with an asterisk.
For each of the nineteen integers $n$ recorded here, one may verify that $E_1(n)$ is the product of the Fermat quotients for $p$ and $q$ with base~$2$, as claimed in Theorem~\ref{thm2pq}.
This formula is proved in the next section.

\begin{table}[tbh]
\caption{Values of $E_1(2pq)$ (new values marked with $^*$).}\label{tableSporadics2pq}
\begin{tabular}{c|ccccccc}
$p$ & $q=5$ & $7$ & $11$ & $13$ & $17$ & $19$ & $23$\\\hline 
$3$      & 
$3$ & 
$9$ & 
$93$ & 
$315$ & 
$3\,855$ & 
$13\,797$ &  
$182\,361^*$ \\
$5$      &  & 
$27$ & 
$279 $ & 
$945 $ & 
$11\,565 $ & 
$41\,391 $ & 
$547\,083^*$ \\
$7$      & & & 
$837$ & 
$2\,835$ & 
$34\,695$ & 
$124\,173^*$ \\ 
$11$      & & & &
$29\,295$ &  
$358\,515^*$ &
$1\,283\,121^*$ &
\end{tabular}
\end{table}

We close this section by considering the case $n=105$, beginning with a proof of the second theorem announced in the introduction.

\begin{proof}[Proof of Theorem~\ref{thm105}]
In \cite{Mossinghoff11}, it was shown that the number of periodic Reinhardt $105$-gons is $E_0(105)=245\,518\,324$, and the method of \cite{HareMossinghoff13} showed that $E_1(105)\geq126\,714\,582$.
Using the current construction, we compute $\ddot{E}_1(105)=211\,752\,810$.
In addition, the methods of \cite{Mossinghoff11} allow us to count a number of additional qualifying polygons.
Let $E_1(n,m)$ denote the number of sporadic Reinhardt polygons with $n$ sides whose corresponding dihedral composition has largest part $m$.
The values of $E_1(105,m)$ for $m=2$ and for $m\geq12$ were computed in \cite{Mossinghoff11}, and the sum of these values is $12\,978\,294$.
Of these, we find that $6\,394\,732$ cannot be generated by the method of Section~\ref{secConstruction}.
We therefore need more than $27$ million additional sporadic polygons to show that $E_1(105)>E_0(105)$, and we find these by computing $E_1(105,m)$ for a few additional values of $m$.
By enumerating the dihedral compositions of $105$ with an odd number of parts and largest part $m$, and removing those with a periodic structure, we compute $E_1(105,11)=9\,194\,314$, $E_1(105,10)=15\,188\,197$, $E_1(105,9)=22\,135\,902$, and $E_1(105,8)=34\,641\,634$.
Of these $81\,160\,047$ polygons, $31\,449\,744$ cannot be generated by our construction.
We conclude that $E_1(105)\geq 249\,597\,286 > E_0(105)$.
Finally, there are only thirteen positive integers $n<105$ having the form $n=pqr$ with $p$ and $q$ distinct odd primes and $r\geq2$:  they are $n=30$, $42$, $45$, $60$, $63$, $66$, $70$, $75$, $78$, $84$, $90$, $99$, and $102$.
Table~1 of \cite{Mossinghoff11} verifies that $E_0(n)>E_1(n)$ for each of these values.
\end{proof}

Because our construction finds $59.8\%$ of all of the sporadic Reinhardt $105$-gons whose corresponding composition has restricted largest part in the ranges we considered, we might expect the true value of $E_1(105)$ to be over $350$ million.

Last, we remark that it may be possible to generalize our method for constructing Reinhardt polynomials by accounting for additional odd prime factors.
For example, let $F(z) = f_3(z) \Phi_3(-z^{35}) + f_5(z) \Phi_5(-z^{21}) + f_7(z) \Phi_7(-z^{15})$, where the coefficients of $f_3(z)$, $f_5(z)$, and $f_7(z)$ are
\begin{align*}
f_3(z) &: \texttt{0000000+-+0000-000+-+0000-+-0+0-00+},\\
f_5(z) &: \texttt{+000-000+-000++--+-+0},\\
f_7(z) &: \texttt{-000+00-00+00-0}.
\end{align*}
The coefficient sequence for $F(z)$ is then
\begin{align*}
\texttt{0}&\texttt{000000000+00000-+0-+-+00-+-+00-00+-+0-+0000-+-000+-+}\\
&\texttt{-+-+-0+-0+-00+00-+-+-0+-00+-+000-00+-00+-+-+-+000-0+}
\end{align*}
and so $F(z)$ is a Reinhardt polynomial for $n=105$.
However, $F(z)$ cannot be generated by using our algorithm.
To see this, suppose $p=3$ and $q=5$, and suppose that $F(z)=A(z) \Phi_3(-z^{35}) + B(z) \Phi_5(-z^{21})$, with $A(z)=\sum_{k=0}^{34} a_k z^k$ and $B(z)=\sum_{k=0}^{21} b_k z^k$.
By observing the constant term and the coefficients of $z^{14}$, $z^{42}$, and $z^{70}$ in $F(z)$ (respectively $0$, $0$, $0$, and $1$), we find $a_0 + b_0 = 0$, $a_7 + b_7 = 0$, $-a_7 + b_0 = 0$, and $a_0 - b_7 = 1$.
Subtracting the fourth equation from the first produces $b_0 + b_7 = -1$, but adding the second and third equations yields $b_0 + b_7 = 0$, a contradiction.
Similar analyses show that there are no decompositions using only $f_3(z)$ and $f_7(z)$, or using only $f_5(z)$ and $f_7(z)$.

\section{The case $n=2pq$}\label{sec2pq}

We show that our method constructs all Reinhardt polygons when $n=2pq$.
To establish this, we first require a generalization of \cite{HareMossinghoff13}*{Lemma~2.2}, which is the case $k=0$ of the following statement.

\begin{lem}\label{lemGenlDecomp}
Let $n = 2^k p q$ with $p$ and $q$ distinct odd primes and $k\geq0$, and suppose that $F(z)$ is a Reinhardt polynomial
for $n$.
Then there exist polynomials $f_1(z)$ and $f_2(z)$ with integer coefficients, $\deg(f_1) < 2^k p$, $\deg(f_2) < 2^k q$, and
\[
F(z) = f_1(z) \Phi_q(-z^{n/q}) + f_2(z) \Phi_p(-z^{n/p}).
\]
Further, we may choose $f_1(z)$ and $f_2(z)$ to have all their coefficients in $\{-1,0,1\}$.
\end{lem}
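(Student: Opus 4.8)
The plan is to build the decomposition in three stages: first produce \emph{some} representation of $F$ over $\mathbb{Z}$, then correct it to meet the two degree bounds, and finally correct it once more so that every coefficient lies in $\{-1,0,1\}$. \textbf{Stage 1 (representation over $\mathbb{Z}$).} Since $n=2^kpq$, the only odd primes dividing $n$ are $p$ and $q$, so by de Bruijn's theorem (as quoted in Section~\ref{secConstruction}) the ideal $(\Phi_{2n}(z))$ of $\mathbb{Z}[z]$ is generated by $z^n+1$, $\Phi_p(-z^{n/p})$, and $\Phi_q(-z^{n/q})$. From $\Phi_p(y)=(y^p-1)/(y-1)$ one gets $\Phi_p(-z^{n/p})=(z^n+1)/(z^{n/p}+1)$, so $z^n+1=(z^{n/p}+1)\Phi_p(-z^{n/p})$ already lies in the ideal generated by $\Phi_p(-z^{n/p})$; hence $(\Phi_{2n}(z))=(\Phi_p(-z^{n/p}),\Phi_q(-z^{n/q}))$, and because $\Phi_{2n}(z)\mid F(z)$ we may write $F(z)=f_1(z)\Phi_q(-z^{n/q})+f_2(z)\Phi_p(-z^{n/p})$ with $f_1,f_2\in\mathbb{Z}[z]$. \textbf{Stage 2 (degree reduction).} The identity $(z^{n/q}+1)\Phi_q(-z^{n/q})=z^n+1=(z^{n/p}+1)\Phi_p(-z^{n/p})$ is a syzygy: replacing $(f_1,f_2)$ by $(f_1+\lambda(z^{n/q}+1),\,f_2-\lambda(z^{n/p}+1))$ leaves $F$ unchanged for any $\lambda\in\mathbb{Z}[z]$. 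Put $N_1=\deg(f_1\Phi_q(-z^{n/q}))$ and $N_2=\deg(f_2\Phi_p(-z^{n/p}))$. If $\max(N_1,N_2)\ge n$, then since $\deg F<n$ the top terms must cancel, forcing $N_1=N_2=:N$ and, the cyclotomic factors being monic, $\mathrm{lc}(f_1)=-\mathrm{lc}(f_2)=:c\ne0$; taking $\lambda=c\,z^{N-n}$, the summand $c\,z^{N-n}(z^{n/q}+1)$ has leading term $c\,z^{N-n+n/q}$ of degree $\deg f_1$ and coefficient $\mathrm{lc}(f_1)$, so it kills the leading term of $f_1$, and symmetrically $-c\,z^{N-n}(z^{n/p}+1)$ kills that of $f_2$. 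So $\max(N_1,N_2)$ strictly decreases, and iterating terminates with $\deg f_1<n/q=2^kp$ and $\deg f_2<n/p=2^kq$.

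\textbf{Stage 3 (coefficients in $\{-1,0,1\}$).} With the degree bounds in force, expanding $\Phi_q(-z^{n/q})=\sum_{j=0}^{q-1}(-1)^jz^{jn/q}$ shows that the coefficient string of $f_1\Phi_q(-z^{n/q})$ consists of $q$ consecutive, nonoverlapping blocks of length $n/q$ carrying the coefficient vector of $f_1$, then its negative, alternately; likewise $f_2\Phi_p(-z^{n/p})$ is $p$ nonoverlapping blocks of length $n/p$ carrying $\pm f_2$ alternately. Thus, writing $m=j(n/q)+b=i(n/p)+e$ with $0\le b<n/q$ and $0\le e<n/p$, the coefficient of $z^m$ in $F$ equals $(-1)^j(f_1)_b+(-1)^i(f_2)_e$; and since $\gcd(n/q,n/p)=2^k$, the Chinese Remainder Theorem decouples these equations into $2^k$ independent $p\times q$ arrays, one per residue of $m$ modulo $2^k$, each of the form $(\text{a coefficient of }F)=\pm(f_1)_b\pm(f_2)_e$ with right-hand sides in $\{-1,0,1\}$. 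We already have an integer solution; the task is to produce one with all entries in $\{-1,0,1\}$. Equivalently, by the syzygy module computation, the decompositions meeting both degree bounds form a coset of the lattice $\{(h_p\mu,-h_q\mu):\mu\in\mathbb{Z}[z],\ \deg\mu<2^k\}$, where $g=\gcd(\Phi_p(-z^{n/p}),\Phi_q(-z^{n/q}))$, $h_p=\Phi_p(-z^{n/p})/g$, $h_q=\Phi_q(-z^{n/q})/g$, and the claim is that some member of this coset has sup-norm at most $1$.

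\textbf{Where the difficulty lies.} The hard part is Stage 3. I would argue by choosing the decomposition minimizing $\sum_b (f_1)_b^2+\sum_e (f_2)_e^2$: if some coefficient had absolute value $\ge 2$, the block equations above would force an entire residue class (modulo $2^k$) of coefficients of the companion polynomial to have absolute value $\ge 1$, after which a carefully chosen shift by an element of $\{(h_p\mu,-h_q\mu)\}$ would strictly decrease the sum of squares, contradicting minimality. Making this precise requires controlling the coefficients of $h_p$ and $h_q$ (which are divisors of $z^n+1$) and, crucially, re-invoking the full defining structure of a Reinhardt polynomial — not merely that the coefficients of $F$ are bounded, but that its nonzero coefficients alternate in sign and are odd in number. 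This is precisely the content of the $k=0$ case in \cite{HareMossinghoff13}*{Lemma~2.2}; the powers of $2$ enter only through the sizes of the residue classes modulo $2^k$ and should introduce no essentially new obstruction.
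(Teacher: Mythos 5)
The paper offers no argument for this lemma at all: it is dispatched by citing Lemma~2.2 of \cite{HareMossinghoff13} (the case $k=0$) and remarking that the general case follows by writing $z^{2^k}$ in place of $z$ inside the cyclotomic factors. Your Stages 1 and 2 are correct and are surely what that omitted proof does: de Bruijn's theorem together with $z^n+1=(z^{n/p}+1)\Phi_p(-z^{n/p})$ gives $F\in(\Phi_p(-z^{n/p}),\Phi_q(-z^{n/q}))$, and the syzygy $(z^{n/q}+1)\Phi_q(-z^{n/q})=(z^{n/p}+1)\Phi_p(-z^{n/p})$ lets you cancel leading terms until $\deg f_1<2^kp$ and $\deg f_2<2^kq$ (up to a sign: you want $\lambda=-c\,z^{N-n}$ to kill the leading coefficient $c$). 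Your identification of the residual freedom as $\{(h_p\mu,-h_q\mu):\deg\mu<2^k\}$ with $h_p=\Phi_{2p}(z^{2^k})$ and $h_q=\Phi_{2q}(z^{2^k})$, and the observation that everything decouples into $2^k$ independent $p\times q$ arrays indexed by the residue of $m$ modulo $2^k$, are also correct and capture exactly the sense in which $k>0$ reduces to $k=0$.

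The gap is Stage 3, which is the entire content of the lemma beyond routine algebra and which you leave as an unexecuted plan. The sum-of-squares descent is not justified: from $\lvert(f_1)_{b_0}\rvert\ge2$ you can conclude that every companion coefficient $(f_2)_e$ in that residue class is nonzero, but the only move available to you adds one signed constant $c$ to \emph{all} the $(f_1)_b$ in the class and subtracts it from all the $(f_2)_e$; the resulting change in $\sum_b(f_1)_b^2+\sum_e(f_2)_e^2$ is $c^2(p+q)$ plus a linear term, and nothing you have said forces the linear term to dominate. You have also flagged the wrong hypothesis as crucial: the alternating-sign and odd-cardinality conditions are not what this step needs. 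What finishes it is the observation that, within one residue class and after the sign normalization $(-1)^{i+j}=(-1)^{b'}(-1)^{e'}$ (which holds because $p$ and $q$ are odd), the array satisfies $\pm u_m=a_b+c_e$ for \emph{every} pair $(b,e)$ with $u_m\in\{-1,0,1\}$; hence $\lvert a_b-a_{b'}\rvert\le2$ and $\lvert c_e-c_{e'}\rvert\le2$, the two ranges sum to at most $2$, and the single free additive constant for that residue class can always be chosen to place all $a_b$ and $c_e$ in $\{-1,0,1\}$. As written, your proposal establishes the decomposition and the degree bounds but not the coefficient bound, except by deferring to the cited lemma --- which is, admittedly, all the paper itself does.
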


We omit the proof, as it is very similar to the one presented in \cite{HareMossinghoff13}, and may be obtained in essence by replacing each occurrence of $z$ when it appears as an argument of a cyclotomic polynomial in that proof with $z^{2^k}$.
(The roles of $p$ and $q$ have also been interchanged for convenience here.)

\begin{proof}[Proof of Theorem~\ref{thm2pq}]
Let $n=2pq$ with $p$ and $q$ distinct odd primes.
Clearly, $\mathbf{c}=(1,1)$ is the only possible composition of $r=2$ into an even number of parts. 
By Lemma \ref{lem:symmetry}, there are no additional sporadic
    Reinhardt polynomials introduced by reversing the roles of $p$ and $q$ 
    in the construction.
Hence Corollary~\ref{corBoundSporadicPolygons} produces $\ddot{E}_1(n)=(2^{p-1}-1)(2^{q-1}-1)/pq$.

We need to show that there are no additional sporadic Reinhardt polynomials that are not accounted for by our construction in Section \ref{secConstruction}, up to dihedral equivalence.
That is, we wish to show that $\ddot{E}_1(n) = E_1(n)$.
Let $F(z) = \sum_{i=0}^{2pq-1}u_i z^i$ be a sporadic Reinhardt polynomial for $n$.
By Lemma~\ref{lemGenlDecomp}, there exist polynomials $f_1(z)$ and $f_2(z)$ with all coefficients in $\{-1,0,1\}$, $\deg f_1 < 2p$, and $\deg f_2 < 2q$, such that $F(z) = f_1(z) \Phi_q(-z^{2p}) + f_2(z) \Phi_p(-z^{2q})$.
Write
\[
f_1(z)\Phi_q(-z^{2p}) = \sum_{i=0}^{2pq-1} s_i z^i
\textrm{\quad and\quad}
f_2(z)\Phi_p(-z^{2q}) = \sum_{j=0}^{2pq-1} t_j z^j,
\]
so that $s_{i+2p}=-s_i$ for $0\leq i<2p(q-1)$ and $t_{j+2q}=-t_j$ for $0\leq j<2(p-1)q$.
To show that $\ddot{E}_1(n) = E_1(n)$, we will establish the following seven statements.
\begin{enumerate}[(i)]
\item There exist $i \equiv 0$ mod $2$ and $j \equiv 1$ mod $2$ such that $u_i \neq 0$ and $u_j \neq 0$.
    \label{step:1}
\item There exist $i \equiv 0$ mod $2$ and $j \equiv 1$ mod $2$ such that $u_i = 0$ and $u_j = 0$.
    \label{step:2}
\item There exist $i \equiv 0$ mod $2$ and $i' \equiv 0$ mod $2$ such that $s_i \neq 0$ and $t_{i'} \neq 0$, 
      and similarly for $j \equiv 1$ mod $2$ and $j' \equiv 1$ mod $2$.
    \label{step:3}
\item There exist $\epsilon_1, \epsilon_2 \in \{-1, 1\}$ such that 
      \begin{equation}
      s_i  \in 
      \begin{cases} \{0, \epsilon_1 (-1)^{i/2}\}, &  \mathrm{if}\ i \equiv 0,\\
                            \{0, \epsilon_2 (-1)^{(i-1)/2}\}, &  \mathrm{if}\ i \equiv 1,
      \end{cases} 
      \label{eq:struct1}
      \end{equation}
      and 
      \begin{equation}
      t_{i'}  \in 
      \begin{cases} \{0, -\epsilon_1 (-1)^{{i'}/2}\}, &  \mathrm{if}\ {i'} \equiv 0,\\
                             \{0, -\epsilon_2 (-1)^{({i'}-1)/2}\}, &  \mathrm{if}\ {i'} \equiv 1.
      \end{cases}
      \label{eq:struct2}
      \end{equation}
    \label{step:4}
\item We may assume without loss of generality that $t_0 = t_1 = -1$.
    \label{step:5}
\item For each $k$, the pair $(s_{2k}, s_{2k+1}) \in (-1)^k \{$\texttt{0+}, \texttt{+0}$\}$.
      These are blocks of the form $S_o(2, (-1)^k)$, as required in \eqref{eqnAs}.
    \label{step:6}
\item For each $k$, the pair $(t_{2k+1}, t_{2k+2}) \in (-1)^k \{$\texttt{00}, \texttt{-+}$\}$.
      These are blocks of the form $S_e(2, (-1)^{k+1})$, as required in \eqref{eqnBs}.
    \label{step:7}
\end{enumerate}
This will complete the proof, as we will have shown that every sporadic Reinhardt polynomial for $n$ can be constructed by using the algorithm of Section \ref{secConstruction}.

For (\ref{step:1}), if $u_i = 0$ for all $i \equiv 0$ mod $2$, then $F(z) = z F_0(z^2)$ where $F_0(z)$ is a Reinhardt polynomial for $n = pq$.
By \cite[Theorem 1.1]{HareMossinghoff13}, $F_0(z)$ is periodic, and hence $F(z)$ is periodic, which contradicts our hypothesis.
Thus there exists an even index $i$ with $u_i \neq 0$.
The case for odd $j$ is similar.

For (\ref{step:2}), assume that $u_i \neq 0$ for all even $i$.
Then either $s_{0} =0$ and $t_{0} \neq 0$, or $s_{0} \neq 0$ and $t_{0} = 0$.
Assume that $s_{0} = 0$ and $t_{0} \neq 0$.
We then see that $t_{2qk} \neq 0$ for all $k$ by the periodicity of the $t_j$.
Since $u_{2qk} \neq 0$ for all $k$, we have that $s_{2qk} = 0$ for all $k$, and since the $s_i$ are $2p$-periodic and $\gcd(p,q) = 1$, we have that $s_{2k} = 0$ for all $k$.
As $f_1(z)$ is nontrivial, there exists an odd $j$ with $s_j \neq 0$.
Further, as $f_1(z) \Phi_q(-z^{2p})$ does not exhibit periodicity by $2q$ (since $F(z)$ is sporadic), there exist $j$ and $k$ such that $s_{j+2qk} \neq (-1)^k s_j$.
Consider the two sequences 
\[
(u_{j-1}, u_j, u_{j+1}) = (t_{j-1}, t_j + s_j, t_{j+1})
\]
and
\[
(u_{j+2kq-1}, u_{j+2kq}, u_{j+2kq+1}) = \left((-1)^k t_{j-1}, (-1)^k t_j + s_{j+2kq}, (-1)^k t_{j+1}\right).
\]
Since $u_i\neq0$ and $s_i=0$ for all even $i$, we know that $t_i\neq0$ for each even index $i$, and so both $t_{j-1}$ and $t_{j+1}$ are nonzero.
As $s_{j+2k} \neq (-1)^k s_j$, we see that these two sequences cannot both be of the correct form (i.e., alternating signs), producing a contradiction.
The roles of $s$ and $t$ are symmetric in this argument, so we reach the same conclusion in the other case, where $s_0 \neq 0$ and $t_0 = 0$.
Thus, there exists an even index $i$ where $u_i = 0$.
The case where $j \equiv 1$ mod $2$ is similar.

For (\ref{step:3}), we show that there exist even $i$ and $i'$ where $s_i \neq 0$ and $t_{i'} \neq 0$.
A similar argument handles the case where $j$ and $j'$ are both odd.
Using (\ref{step:2}), select an even index ${i_0}$ so that $u_{i_0} \neq 0$.
Then exactly one of $s_{i_0}$ and $t_{i_0}$ is nonzero.
Assume without loss of generality that $s_{i_0} \neq 0$ and $t_{i_0} = 0$.
If there also exists an even $i'$where $t_{i'} \neq 0$, we are done, so assume instead that $t_{i'} = 0$ for all even indices $i'$.
Consider the three sequences
\[
(u_{{i_0}+2p(k-1)}, \ldots, u_{{i_0}+2pk}), \;
(s_{{i_0}+2p(k-1)}, \ldots, s_{{i_0}+2pk}), \;
(t_{{i_0}+2p(k-1)}, \ldots, t_{{i_0}+2pk}).
\]
By noticing that $u_{i_0 + 2pk} = s_{i_0 + 2pk} + t_{i_0 + 2pk} = s_{i_0 + 2pk} = (-1)^k s_{i_0}$,
we have that the first sequence above starts and ends with nonzero terms of opposite sign.
Further, as $F(z)$ is a Reinhardt polynomial, the nonzero terms in the first sequence must have alternating sign.
The number of nonzero terms in the second sequence is constant for all $k$ by periodicity, so the number of nonzero terms contributed from the third sequence must have the same parity for all $k$ (either always even, or always odd).
Further, the $t_j$ can only be nonzero if $j$ is odd by assumption.
By the Chinese remainder theorem and the periodicity of the $t_j$, we then see that 
    $t_{i_0+2k}, \ldots, t_{i_0+2k+2p}$ has the same number of nonzero terms for all $k$.
Hence either all of the $t_j$ with $j$ odd are nonzero, or they are all zero.
They cannot all be zero, for otherwise $f_2(z)$ would be trivial. 
Hence $t_j \neq 0$ for each odd $j$.
From (\ref{step:2}), there exists an odd $j_0$ such that $u_{j_0} = 0$, so $s_{j_0} = - t_{j_0} \neq 0$.
The periodicity of the $s_i$ and $t_j$ then implies that $t_{j_0 + 2k} = (-1)^k t_{j_0}$ for each $k$, and
this produces that $f_2(z)$ is periodic with respect to $2p$.
Thus $F(z)$ is periodic, a contradiction.
Therefore, there exist even $i$ and $i'$ such that $s_i \neq 0$ and $t_{i'} \neq 0$.

Using this, we now show that there exist $\epsilon_1, \epsilon_2 \in \{-1, 1\}$ that 
    satisfy \eqref{eq:struct1} and \eqref{eq:struct2}.
Consider the case when $i \equiv i' \mod 2$ such that $s_i \neq 0$
     and $t_{i'} \neq 0$.
By the Chinese remainder theorem and the periodicity of the $s_i$ and $t_j$, we have that 
    $t_{i + 2k }  \in \{0, -(-1)^k s_i\}$ and $s_{i' + 2k} \in \{0, - (-1)^k t_{i'}\}$.
This establishes (\ref{step:4}).

Since $F(z)$ is a Reinhardt polynomial, it has an odd number of nonzero coefficients.
Thus, exactly one of $f_1(z)$ or $f_2(z)$ has an odd number of nonzero coefficients.
By reversing the roles of $p$ and $q$ if necessary, we may assume that $f_1(z)$ has an odd number of them, and $f_2(z)$ has an even number.
Note that $f_2(z)$ cannot have an equal number of zero and nonzero coefficients, since $q$ is odd.
If $f_2(z)$ has more zero coefficients than nonzero coefficients, consider the transformation
\begin{equation*}
s_i^* =
\begin{cases}
s_i -  \epsilon_1 (-1)^{i/2}, &  \mathrm{if}\ 2\mid i,\\
s_i -  \epsilon_2 (-1)^{(i-1)/2}, &  \mathrm{if}\ 2\nmid i,
\end{cases}
\mathrm{\ and\ \ }
t_j^* =
\begin{cases}
t_j +  \epsilon_1 (-1)^{j/2}, &  \mathrm{if}\ 2\mid j,\\
t_j +  \epsilon_2 (-1)^{(j-1)/2}, &  \mathrm{if}\ 2\nmid j.
\end{cases}
\end{equation*}
By \eqref{eq:struct1} and \eqref{eq:struct2}, this remains a valid decomposition of $F(z)$, $f_2^*(z)$ has an even number of nonzero coefficients, and $f_1^*(z)$ has an odd number.
In addition, $f_2^*(z)$ has more nonzero coefficients than zero coefficients.
Thus, we may assume without loss of generality that $f_2(z)$ has an even number of nonzero coefficients, and more nonzero coefficients than zero coefficients.
We now show that $f_2(z)$ has two adjacent nonzero coefficients with the same sign.
If there are three consecutive nonzero coefficients, then by \eqref{eq:struct1} and \eqref{eq:struct2} there must exist a \texttt{++} or \texttt{--} in the sequence for $f_2(z)$.
Otherwise, all subsequences of nonzero coefficients have length at most $2$, and so at least one subsequence of zero coefficients has length $1$.
Thus, there exist two subsequences of two nonzero coefficients whose starting indices have opposite parity, and again by \eqref{eq:struct1} and \eqref{eq:struct2} there will exist a \texttt{++} or \texttt{--}.
By cyclically shifting and multiplying by $-1$ if necessary, we may assume that $t_0 = t_1 = -1$.
This establishes (\ref{step:5}).

For (\ref{step:6}), note first that $(s_{2k},s_{2k+1}) \in (-1)^k \{$\texttt{0+}, \texttt{+0}, \texttt{++}$\}$, since no other subsequences can sum with $(-1)^k$\texttt{--} to produce a sequence from $\{-1, 0, 1\}$ whose nonzero terms alternate in sign.
We wish to show that no $(-1)^k$\texttt{++} pairs exist.
Assume that $(s_{2k}, s_{2k+1})$ has the form $(-1)^k$\texttt{++} for some $k$.
If this were the case for each $k$, then $f_1(z)$ would be $2q$-periodic, and hence $F(z)$ would not be sporadic.  
Hence there exists some $k$ such that $(s_{2k}, s_{2k+1}, s_{2k+2}, s_{2k+3}) = (-1)^k$\texttt{++0-} or $(-1)^k$\texttt{++-0}.
Since we are assuming that there is a $k$ with $(s_{2k}, s_{2k+1}) = (-1)^k$\texttt{++}, then by an argument similar to the one above, for each $\ell$ we have $(t_{2\ell}, t_{2\ell + 1}) \in (-1)^{\ell}\{$\texttt{--}, \texttt{0-}, \texttt{-0}$\}$.
We know that there is at least one $\ell$ such that 
    $(t_{2\ell}, t_{2\ell + 1}) = (-1)^\ell$\texttt{--},
so by a similar argument there exists some $\ell$ such that the sequence
    $(t_{2\ell}, t_{2\ell + 1}, t_{2\ell+2}, t_{2\ell + 3}) 
    = (-1)^\ell$\texttt{0-++} or $(-1)^\ell$\texttt{-0++}.
By the Chinese remainder theorem, we can find an index $m$ such that 
    $(t_{2m}, t_{2m + 1}, t_{2m+2}, t_{2m + 3}) = (-1)^m$ \texttt{0-++} or $(-1)^m$\texttt{-0++},
and 
    $(s_{2m}, s_{2m+1}, s_{2m+2}, s_{2m+3}) = (-1)^m$\texttt{++0-} or 
    $(-1)^m$\texttt{++0-}.
We see that none of these possibilities sums to a sequence of alternating signs, a contradiction.
Therefore $(s_{2k}, s_{2k+1}) \in (-1)^k \{$\texttt{0+}, \texttt{+0}$\}$, establishing (\ref{step:6}).

Since $f_1(z)$ is not trivial, we see there must exist at least one occurrence of each of $(-1)^k$\texttt{0+} and $(-1)^k$\texttt{0+}, for otherwise $f_1(z)$, and hence $F(z)$, would exhibit $2q$-periodicity.
Thus, we have $(s_{2k+1}, s_{2k+2}) \in (-1)^k \{$\texttt{+-}, \texttt{0-}, \texttt{+0}, \texttt{00}$\}$.
Moreover, as we range over $k$, we must witness at least one $(-1)^k$\texttt{+-}, at least one $(-1)^k$\texttt{00}, and at least one of $(-1)^k$\texttt{0-} or $(-1)^k$\texttt{+0} (possibly both).
Assume there exists a $k$ such that $(s_{2k+1}, s_{2k+2}) = (-1)^k$\texttt{0-}.
Consider the transformation 
\begin{equation*}
s_i^*  =
\begin{cases}
s_i, &  \mathrm{if}\ 2\mid i,\\
s_i -  \epsilon_2 (-1)^{(i-1)/2}, &  \mathrm{if}\ 2\nmid i,
\end{cases}
\mathrm{\ and\ \ }
t_j^*  =
\begin{cases}
t_j, &  \mathrm{if}\ 2\mid j,\\
t_j +  \epsilon_2 (-1)^{(j-1)/2}, &  \mathrm{if}\ 2\nmid j.
\end{cases}
\end{equation*}
Under this transformation, $f_1^*(z)$ will have an even number of nonzero coefficients, $f_2^*(z)$ will have an odd number, and we still have a valid decomposition of $F(z)$.
Furthermore, $f_1^*(z)$ contains $(s_{2k+1}^*, s_{2k+2}^*) = (-1)^k$\texttt{--}.
By the previous argument, all subsequences of the transformed $(t_{2k+1}^*, t_{2k+2}^*)$ have the form
    $(-1)^k$\texttt{0+} or $(-1)^k$\texttt{+0}.
Transforming back, we see that the original $(t_{2k+1}, t_{2k+2})$ has the form 
    $(-1)^k$\texttt{-+} or $(-1)^k$\texttt{00}.
These are the desired blocks of the form $S_e(2, (-1)^{k+1})$, and this completes (\ref{step:7}).
\end{proof}

\section*{Acknowledgements}

Computational support was provided in part by the Canadian Foundation for Innovation, the Ontario Research Fund, WestGrid, and the Centre for Interdisciplinary Research in Mathematics and Computer Science (IRMACS).
We also thank Robert Kenyon for his helpful comments on this manuscript.

\bibliographystyle{amsplain}
\def\lfhook#1{\setbox0=\hbox{#1}{\ooalign{\hidewidth
  \lower1.5ex\hbox{'}\hidewidth\crcr\unhbox0}}}
\begin{bibdiv}
\begin{biblist}

\bib{AudetHansenMessine09}{article}{
      author={Audet, C.},
      author={Hansen, P.},
      author={Messine, F.},
       title={Isoperimetric polygons of maximum width},
        date={2009},
        ISSN={0179-5376},
     journal={Discrete Comput. Geom.},
      volume={41},
      number={1},
       pages={45\ndash 60},
         url={http://dx.doi.org/10.1007/s00454-008-9103-9},
      review={\MR{2470069 (2009m:51031)}},
}

\bib{BezdekFodor00}{article}{
      author={Bezdek, A.},
      author={Fodor, F.},
       title={On convex polygons of maximal width},
        date={2000},
        ISSN={0003-889X},
     journal={Arch. Math. (Basel)},
      volume={74},
      number={1},
       pages={75\ndash 80},
         url={http://dx.doi.org/10.1007/PL00000413},
      review={\MR{1728365 (2000m:52025)}},
}

\bib{deBruijn53}{article}{
      author={de~Bruijn, N.~G.},
       title={On the factorization of cyclic groups},
        date={1953},
     journal={Nederl. Akad. Wetensch. Proc. Ser. A.},
      volume={15},
       pages={370\ndash 377},
}

\bib{Gashkov07}{article}{
      author={Gashkov, S.},
       title={Inequalities for convex polygons and {R}einhardt polygons},
        date={2007},
     journal={Mat. Prosveshchenye (3)},
      volume={11},
       pages={91\ndash 103},
        note={(Russian)},
}

\bib{HareMossinghoff13}{article}{
      author={Hare, K.~G.},
      author={Mossinghoff, M.~J.},
       title={Sporadic {R}einhardt polygons},
        date={2013},
        ISSN={0179-5376},
     journal={Discrete Comput. Geom.},
      volume={49},
      number={3},
       pages={540\ndash 557},
         url={http://dx.doi.org/10.1007/s00454-012-9479-4},
      review={\MR{3038529}},
}

\bib{Mossinghoff06AMM}{article}{
      author={Mossinghoff, M.~J.},
       title={A \$1 problem},
        date={2006},
        ISSN={0002-9890},
     journal={Amer. Math. Monthly},
      volume={113},
      number={5},
       pages={385\ndash 402},
         url={http://dx.doi.org/10.2307/27641947},
      review={\MR{2225472 (2006m:51021)}},
}

\bib{Mossinghoff06DCG}{article}{
      author={Mossinghoff, M.~J.},
       title={Isodiametric problems for polygons},
        date={2006},
        ISSN={0179-5376},
     journal={Discrete Comput. Geom.},
      volume={36},
      number={2},
       pages={363\ndash 379},
         url={http://dx.doi.org/10.1007/s00454-006-1238-y},
      review={\MR{2252109 (2007i:52014)}},
}

\bib{Mossinghoff11}{article}{
      author={Mossinghoff, M.~J.},
       title={Enumerating isodiametric and isoperimetric polygons},
        date={2011},
        ISSN={0097-3165},
     journal={J. Combin. Theory Ser. A},
      volume={118},
      number={6},
       pages={1801\ndash 1815},
         url={http://dx.doi.org/10.1016/j.jcta.2011.03.004},
      review={\MR{2793611}},
}

\bib{Reinhardt22}{article}{
      author={Reinhardt, K.},
       title={{E}xtremale {P}olygone gegebenen {D}urchmessers},
        date={1922},
     journal={Jahresber. Deutsch. Math.-Verein.},
      volume={31},
       pages={251\ndash 270},
}

\end{biblist}
\end{bibdiv}

\end{document}